\newtheorem{theorem}{Theorem}
\newtheorem{Example}{Example}
\newtheorem{proposition}{Proposition}
\newtheorem{lemma}{Lemma}
\newtheorem{corollary}{Corollary}
\title{On the number of cusps of deformations of complex polynomials}
\author{Kazumasa Inaba} 
\address{Mathematical Institute, Tohoku University, Sendai 980-8578, Japan}
\email{kazumasa.inaba.q3@dc.tohoku.ac.jp}
\begin{document}
\renewcommand{\thefootnote}{\fnsymbol{footnote}}
\footnote[0]{2010\textit{ Mathematics Subject Classification}.
Primary 57R45; Secondary: 58K05, 58K60.
}

\footnote[0]{\textit{Key words and phrases}. 
excellent map, cusp, complex polynomial 
}

\maketitle


\begin{abstract}
Let $f$ be a $1$-variable complex polynomial such that $f$ has a singularity at the origin. 
In the present paper, we show that there exists a deformation $f_{t}$ of $f$ which has only 
fold singularities and cusps as singularities of a real polynomial map from $\Bbb{R}^{2}$ to $\Bbb{R}^{2}$. 
We then calculate the number of cusps of $f_t$ in a sufficiently small neighborhood of the origin and 
estimate the number of cusps of $f_t$ in $\Bbb{R}^{2}$. 
\end{abstract}

\section{Introduction}
Let $f : \Bbb{R}^{2} \rightarrow \Bbb{R}^{2}$ be a smooth map 
which has fold singularities and cusps as singularities. 
We call such a map an \textit{excellent map}. 
In \cite{W}, Whitney showed that the set of excellent maps is dense in 
$C^{\infty}(\Bbb{R}^{2}, \Bbb{R}^{2})$. 

It's known that there is a relation between 
the topology of surfaces and 
the topology of the critical locus of a map. 
Quine \cite{Q} and Fukuda--Ishikawa \cite{FI} studied the number of cusps of stable maps 
between oriented $2$-manifolds. 
The degree of cusps of a stable map is determined by the topological degree of a stable map and the Euler 
characteristics of surfaces. 
Fukuda and Ishikawa also studied the number of cusps of stable perturbations of generic map germs \cite{FI}. 
They showed the number of cusps modulo $2$ is a topological invariant of generic map germs. 
Moreover, the number of cusps modulo $2$ depends only on the topology of surfaces. 
Krzy\.{z}anowska and Szafraniec gave 
a criterion to determine if a polynomial map is an excellent map or not \cite{KS}. 
They also gave an algorithm to compute the number of cusps of generic polynomial maps. 
In \cite{S}, Szafraniec considered bifurcations of cusps of families of plane-to-plane maps and 
presented an algebraic method for computing the number of cusps of analytic families. 
In holomorphic case, Gaffney and Mond gave an algebraic formula to count the number of cusps and nodes 
of a generic perturbation of finitely determined holomorphic map germs from $(\Bbb{C}^{2}, \mathbf{o})$ 
to $(\Bbb{C}^{2}, \mathbf{o})$, where $\mathbf{o}$ is the origin of $\Bbb{C}^{2}$ \cite{GM}. 
Farnik, Jelonek and Ruas described the number of cusps and nodes of generic complex polynomial maps \cite{FJR}. 

In this paper, we study certain deformations of complex polynomials. 
We calculate explicitly the number of cusps of deformations by using multiplicities of singularities of 
complex polynomials. 
Let $f(z)$ be a complex polynomial such that $f(0) = 0$. We identify $\Bbb{C}$ with $\Bbb{R}^{2}$. 
Then $f(z)$ defines a real polynomial map 
\[
f :\Bbb{R}^{2} \rightarrow \Bbb{R}^{2}, \ \ \ (x, y) \mapsto (\Re f(x,y), \Im f(x,y)),
\]
where $z = x + \sqrt{-1}y$. 
Assume that the origin $0$ of $\Bbb{C}$ is a singularity of $f$. 
We define a \textit{linear deformation $f_t$ of $f$} as follows: 
\[
f_{t}(z) := f(z) + t(a+ib)\bar{z}, 
\]
where $a, b, t \in \Bbb{R}, i = \sqrt{-1}$ and $0 < \lvert t\rvert \ll 1$. 
Note that a linear deformation $f_t$ of $f$ is not a complex polynomial, 
but is a $1$-variable mixed polynomial in the sense of Oka \cite{O}. 
We now regard a mixed polynomial map $f_{t} : \Bbb{C} \rightarrow \Bbb{C}$ 
as a real polynomial map $(\Re f_{t}, \Im f_{t}) : \Bbb{R}^{2} \rightarrow \Bbb{R}^{2}$. 
If $f(z)=z^{n}$, Fukuda and Ishikawa showed that the number of cusps of a linear deformation of $f$ 
is congruent to $n+1$ modulo $2$, see \cite[Example 2.3]{FI}. 
In \cite{IIKT}, 
the author, Ishikawa, Kawashima and Nguyen showed that there exist linear deformations of $2$-variable Brieskorn polynomials which are excellent maps and 
we estimated the number of cusps of linear deformations. 

As we will show in Lemma $2$, 
$f_t$ is an excellent map for $0 < \lvert t\rvert \ll 1$ 
if $a$ and $b$ lie outside the union of zero sets of 
analytic functions determined by $a, b$ and $f$. 
In particular, such $a$ and $b$ are generic. 
The main theorem is the following. 
\begin{theorem}
Let $f(z)$ be a complex polynomial and $k$ be the multiplicity of the origin. 
Suppose that $k \geq 2$. 
If a linear deformation $f_{t}$ of $f$ is an excellent map for $0 < \lvert t\rvert \ll 1$, 
then the number of cusps of $f_{t}|_{U}$ is equal to $k+1$,
where $U$ is a sufficiently small neighborhood of the origin. 
\end{theorem}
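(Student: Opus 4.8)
The plan is to reduce the count to a winding-number computation along the singular curve, which for $0<|t|\ll1$ shrinks to the origin and is therefore governed by the leading term of $f$. First I would write $f(z)=c_{k}z^{k}+(\text{higher order terms})$ with $c_{k}\neq0$ and put $w=a+ib$, which we may take to be nonzero. Since $f_{t}$ is a holomorphic function plus the term $tw\bar z$, its Jacobian as a map $\mathbb{R}^{2}\to\mathbb{R}^{2}$ is $|f'(z)|^{2}-t^{2}|w|^{2}$, so its singular set is $\Sigma_{t}=\{\,z:\ |f'(z)|=|t|\,|w|\,\}$. I would then fix a ball $U$ about $0$ so small that $f'$ and $f''$ vanish on $U$ only at the origin, with orders $k-1$ and $k-2$. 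Using a local coordinate $v=z(1+\cdots)$ in which $f'(z)=kc_{k}v^{k-1}$, one sees that for $0<|t|\ll1$ the set $\Sigma_{t}\cap U$ is a single smooth simple closed curve $C_{t}\subset U\setminus\{0\}$ winding once around $0$ and collapsing to $0$ as $t\to0$, with $f'|_{C_{t}}$ covering the circle $\{|u|=|t||w|\}$ with degree $k-1$.

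Next I would use the description of cusp points. At a point of $\Sigma_{t}$ the kernel of $df_{t}$ is the real line with direction $\ell$ satisfying $f'(z)\ell+tw\,\overline{\ell}=0$, while $T_{z}\Sigma_{t}$ has direction $i\,\overline{f''(z)}\,f'(z)$, since the complex gradient of the Jacobian is $2\,\overline{f''(z)}\,f'(z)$. A singular point is a cusp exactly when these two lines agree, which after clearing positive factors is the condition
\[
t\,w\,f''(z)^{2}\,\overline{f'(z)}^{\,3}\in\mathbb{R}_{>0}.
\]
By Lemma $2$, $f_{t}$ is an excellent map for $0<|t|\ll1$, so on $C_{t}$ this relation holds transversally at finitely many points, each a genuine cusp. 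Writing $f'(z)=|t||w|e^{i\beta(z)}$, $f''(z)=|f''(z)|e^{i\gamma(z)}$ and $w=|w|e^{i\alpha}$, the condition becomes $\alpha+2\gamma(z)-3\beta(z)\equiv\sigma\pmod{2\pi}$, where $\sigma\in\{0,\pi\}$ is determined by the sign of $t$.

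To count the solutions I would apply the argument principle to $f'$ and $f''$ along $C_{t}$: since inside $C_{t}$ their only zeros are at $0$, of orders $k-1$ and $k-2$, one has $\Delta_{C_{t}}\arg f'=2\pi(k-1)$ and $\Delta_{C_{t}}\arg f''=2\pi(k-2)$, so the phase $\alpha+2\gamma-3\beta$ has total variation $2\pi(2(k-2)-3(k-1))=-2\pi(k+1)$ around $C_{t}$. Thus the map $C_{t}\to\mathbb{R}/2\pi\mathbb{Z}$ given by this phase has degree $-(k+1)$, so the cusp relation has net $k+1$ solutions. The hard part is to show that no cancellation occurs, i.e.\ that the phase is strictly monotone along $C_{t}$ and hence has exactly $k+1$ zeros. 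For $f(z)=z^{k}$ this is immediate: $\Sigma_{t}$ is the round circle $|z|=(|t||w|/k)^{1/(k-1)}$ and the phase equals $\alpha-(k+1)\theta$. For general $f$ I would exploit that $C_{t}$ collapses to $0$, so that on $C_{t}$ the functions $f''/f'$ and $f'''/f''$ are well approximated by $(k-1)/z$ and $(k-2)/z$ and a parametrisation satisfies $z'(s)=iz(s)(1+O(\rho_{t}))$; this gives $\tfrac{d}{ds}(2\arg f''-3\arg f')=-(k+1)+O(\rho_{t})<0$ uniformly on $C_{t}$ for $|t|$ small, equivalently one rescales $z=\rho_{t}\zeta$ and passes to the model in $C^{1}$ on compacta. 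Hence $f_{t}|_{U}$ has exactly $k+1$ cusps. The main obstacle, and the only place the smallness of $U$ and of $t$ is really needed, is precisely this uniform control of the higher-order terms of $f$ guaranteeing that none of the $k+1$ cusps produced by the degree count cancel.
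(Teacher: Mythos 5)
Your proposal is correct, but it takes a genuinely different route from the paper. The paper counts cusps via the Krzy\.{z}anowska--Szafraniec criterion (Proposition 1): cusps of $f_{t}|_{U}$ are exactly the zeros away from the origin of the mixed polynomial $G_{t}=-2i\bigl(\frac{\partial f}{\partial z}\bigr)^{2}\overline{\frac{\partial^{2} f}{\partial z\partial z}}+2ti(a+ib)\frac{\partial^{2} f}{\partial z\partial z}\overline{\frac{\partial f}{\partial z}}$; it then shows (Lemma 3) that every such zero is a \emph{positive simple} root, and applies Oka's conservation of the multiplicity with sign under bifurcation, $\nu+m_{s}(G_{t},0)=m_{s}(G_{0},0)$, with $m_{s}(G_{0},0)=2(k-1)-(k-2)=k$ and $m_{s}(G_{t},0)=(k-2)-(k-1)=-1$, whence $\nu=k+1$. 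You instead characterize cusps geometrically as the points of the fold curve $\Sigma_{t}=\{|f'|=|t||w|\}$ where $\ker df_{t}$ is tangent to $\Sigma_{t}$, encode this as the phase condition $t\,w\,f''(z)^{2}\overline{f'(z)}^{3}\in\mathbb{R}_{>0}$ --- which is indeed equivalent to $G_{t}(z)=0$ on $J^{-1}(0)$, so the two counts agree --- and then count its solutions by the argument principle together with a monotonicity argument obtained by rescaling $z=\rho_{t}\zeta$ to the homogeneous model. The proofs are morally parallel: your winding number $-(k+1)$ of the phase along $C_{t}$ corresponds to the paper's $m_{s}(G_{0},0)-m_{s}(G_{t},0)=k+1$, and your strict monotonicity of the phase (ruling out cancelling pairs of tangency points) plays exactly the role of the paper's Lemma 3; you correctly identify this as the step that cannot be skipped, and your rescaling sketch ($f''/f'\approx(k-1)/z$, $f'''/f''\approx(k-2)/z$, $z'(s)=iz(1+O(\rho_{t}))$, also covering $k=2$) does close it. What your route buys is independence from Oka's mixed-curve intersection theory and from Proposition 1, since excellence of $f_{t}$ lets you identify tangency points with cusps directly, at the price of the $C^{1}$ control of $\Sigma_{t}$ under rescaling; what the paper's route buys is that the no-cancellation step reduces to a short comparison of $\bigl|\frac{\partial G_{t}}{\partial z}\bigr|$ and $\bigl|\frac{\partial G_{t}}{\partial \bar z}\bigr|$ via the leading term of $f$, and the same formalism is then reused verbatim for Corollary 1 and Theorems 2 and 3.
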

We estimate the number of cusps of $f_t$ in $\Bbb{R}^{2}$. 
\begin{corollary}
Let $f_{t}$ be a liner deformation of a complex polynomial $f$ in Theorem $1$ and $n = \deg f$. 
Assume that $n \geq 2$. 
Then the number of cusps of $f_{t}$ belongs to 
$[n+1, 3n-3]$. 
\end{corollary}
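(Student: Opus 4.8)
The plan is to localize the count of cusps around the critical points of $f$ and to apply Theorem 1 at each of them. First I would determine the singular set of the mixed map $f_t$: viewing $f_t(z) = f(z) + t(a+ib)\bar z$ as a map $\Bbb{R}^2 \to \Bbb{R}^2$, its Jacobian determinant equals $|\partial f_t/\partial z|^2 - |\partial f_t/\partial\bar z|^2 = |f'(z)|^2 - t^2(a^2+b^2)$, so the singular set of $f_t$ is the level curve $\{z : |f'(z)| = |t|\sqrt{a^2+b^2}\}$. Since $f'$ is a polynomial of degree $n-1\ge 1$, it has finitely many zeros $z_1 = 0, z_2,\dots,z_r$, which are exactly the critical points of $f$, and the sublevel set $\{|f'(z)|\le\varepsilon\}$ is compact and shrinks to $\{z_1,\dots,z_r\}$ as $\varepsilon\to 0^+$. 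Fixing pairwise disjoint small disks $B_j = B(z_j,\delta)$, it follows that for all $0<|t|\ll 1$ the singular set of $f_t$, hence every cusp of $f_t$, lies in $\bigsqcup_{j=1}^{r} B_j$; thus the number of cusps of $f_t$ in $\Bbb{R}^2$ equals $\sum_{j=1}^{r} N_j$, where $N_j$ is the number of cusps of $f_t$ inside $B_j$.

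Next I would compute each $N_j$ via Theorem 1. Put $w = z - z_j$ and $\tilde f_j(w) := f(z_j+w) - f(z_j)$; then $f_t(z) = \tilde f_j(w) + t(a+ib)\bar w + c_j$ with $c_j$ a constant, so in the coordinate $w$ the map $f_t$ is, up to the harmless additive constant $c_j$, the linear deformation $(\tilde f_j)_t$ of the complex polynomial $\tilde f_j$ with the \emph{same} parameters $a,b,t$. Because $(\tilde f_j)_t$ is obtained from $f_t$ by a translation of the source and an additive constant in the target, it has the same singular set and the same local singularity types as $f_t$; in particular it is an excellent map for $0<|t|\ll 1$, so the hypotheses of Theorem 1 hold for $\tilde f_j$. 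The origin is a singularity of $\tilde f_j$ of multiplicity $m_j = \mu_j + 1 \ge 2$, where $\mu_j\ge 1$ is the order of vanishing of $f'$ at $z_j$ (and $m_1 = k$). Theorem 1 then gives $N_j = m_j + 1$ for $\delta$ small and $0<|t|\ll 1$, whence the total number of cusps of $f_t$ equals $\sum_{j=1}^{r}(m_j + 1)$.

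Finally I would carry out the bookkeeping. Since $\sum_{j=1}^{r}\mu_j = \deg f' = n-1$ we get $\sum_{j=1}^{r}(m_j+1) = \sum_{j=1}^{r}(\mu_j + 2) = (n-1) + 2r$. On one hand $r\ge 1$; on the other hand $n-1 = \sum_{j=1}^{r}(m_j - 1) \ge (k-1) + (r-1)$ together with $k\ge 2$ forces $r\le n-1$. Hence $n+1 = (n-1)+2 \le (n-1) + 2r \le (n-1) + 2(n-1) = 3n-3$, which is the asserted range. The endpoints are realized when $f$ has the origin as its only critical point, i.e.\ $f' = c\,z^{n-1}$ (then $k=n$, $r=1$, giving $n+1$ cusps), and when $f$ has $n-1$ distinct Morse critical points (then $k=2$, $r=n-1$, giving $3n-3$ cusps).

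I expect the subtle point to be the second paragraph, namely verifying that Theorem 1 — a statement about the germ of a linear deformation at the origin — may legitimately be invoked at each critical point $z_j\neq 0$. The key observations are that cusps and the excellent-map property are local, and that a linear deformation of $f$, read near $z_j$, is again a linear deformation (of the translate $\tilde f_j$, with unchanged parameters $a,b,t$), so that the assumption that $f_t$ is excellent transfers to $(\tilde f_j)_t$. Everything else is the level-curve estimate of the first paragraph together with a count of the zeros of $f'$.
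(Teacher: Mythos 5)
Your proposal is correct and follows essentially the same route as the paper: localize the cusps at the zeros $w_j$ of $f'$, apply Theorem 1 (after translating $w_j$ to the origin) to get $\mu_j+2$ cusps near each, and bound the number $r$ of critical points by $1\le r\le n-1$ to obtain $[n+1,3n-3]$. The only difference is that you spell out two steps the paper leaves implicit — the level-set argument $J=|f'|^2-t^2(a^2+b^2)$ showing all singularities of $f_t$ concentrate near the zeros of $f'$, and the translation argument transferring the hypotheses of Theorem 1 to each $w_j$ — which is a welcome clarification but not a different method.
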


This paper is organized as follows. In Section $2$ 
we give the definition of excellent maps,  
a~criterion to study generic polynomial mappings and 
introduce the notation of the multiplicity of roots of mixed polynomials. 
In Section $3$ we show the existence of linear deformations which are excellent maps. 
In Section $4$ we prove Theorem $1$. 
In Section $5$ we give an example of a deformation of a complex polynomial which has 
$(n+1)$-cusps and also an example which has $(3n-3)$-cusps.

The author would like to thank and Professor Toshizumi Fukui and 
Professor Masaharu Ishikawa for precious comments and fruitful suggestions.

\section{Preliminaries}
\subsection{Excellent maps}
Let $X$ and $Y$ be $2$-dimensional smooth manifolds. 
A smooth map $f : X \rightarrow Y$ is called an \textit{excellent map} 
if for any $p \in X$, there exist local coordinates $(x, y)$ centered at $p$ and 
local coordinates centered at $f(p)$ such that $f$ is locally described in one of the following forms: 

\begin{enumerate}
\item
$(x, y) \mapsto (x, y)$,
\item
$(x, y) \mapsto (x, \pm y^{2})$,
\item
$(x, y) \mapsto (x, y^{3}+xy)$. 
\end{enumerate}
A point in case $(1)$ is a regular point. Points in cases $(2)$ and $(3)$ are called 
a \textit{fold} and a \textit{cusp}, respectively.

We introduce the bundle $J^{r}(X, Y)$ of $r$-jets and its submanifolds 
$S_{k}(X, Y)$ and $S_{1}^{2}(X, Y)$ for $k= 1, 2$. 
Let $j^{r}f(p)$ be the $r$-jet of $f$ at $p$ and set 
\[
J^{r}(X, Y) := \textstyle\bigcup_{(p, q) \in X \times Y} J^{r}(X, Y, p, q), 
\]
where $J^{r}(X, Y, p, q) = \{ j^{r}f(p) \mid f(p) = q \}$. 
The set $J^{r}(X, Y)$ is called \textit{the bundle of $r$-jets of maps from $X$ into $Y$}. 
The \textit{$r$-extension} $j^{r}f : X \rightarrow J^{r}(X, Y)$ of $f$ 
is defined by 
$p \mapsto j^{r}f(p)$, where $p \in X$.
It is known that $J^{r}(X, Y)$ is a~smooth manifold and 
the $1$-extension $j^{r}f$ of $f$ is a smooth map. 
We define a submanifold
of $J^{1}(X, Y)$ for $k = 1, 2$ as follows: 
\[
S_{k}(X, Y) = \{ j^{1}f(p) \in J^{1}(X, Y) \mid 
\text{rank}\>df_{p} = 2-k \}. 
\]

A smooth map $f : X \rightarrow Y$ is an excellent map 
if and only if 
\begin{enumerate}
\item
$j^{1}f$ is transversal to $S_{1}(X, Y)$ and $S_{2}(X, Y)$, 
\item
$j^{2}f$ is transversal to $S_{1}^{2}(X, Y)$, 
\end{enumerate}
where 
$S_{1}^{2}(X, Y)$ 
is defined as follows: 
\[
S_{1}^{2}(X, Y) = 
\left\{j^{2}f(p) \in J^{2}(X, Y) \;\left|\; 
\begin{aligned}
& j^{1}f(p) \in S_{1}(X, Y), \\  
& j^{1}f~\text{is transversal to}~S_{1}(X, Y)~\text{at}~p, \\
& \text{rank}\>d(f~|~S_{1}(f))(p)~=~0 \\  
\end{aligned}
\right.\right\}. 
\]
It is known that the subset of smooth maps from $X$ to $Y$ which 
are excellent maps 
is open and dense in $C^{\infty}(X, Y)$ topologized with the $C^{\infty}$-topology \cite{L1, W}.

\subsection{Singularities of polynomial maps}
Let $g = (g_{1}, g_{2}): U \rightarrow \Bbb{R}^{2}$ be a polynomial map, where $U$ is an open set. 
Set 
$J = \frac{\partial (g_{1}, g_{2})}{\partial (x, y)}, G_{i} = \frac{\partial (g_{i}, J)}{\partial (x, y)}$ 
for $i=1, 2$. 
We define the algebraic set $G'$ as follows: 
\[
G' := \Bigl\{ (x,y) \in U \mid J(x, y) = G_{1}(x, y) = G_{2}(x, y) = 
\frac{\partial (G_{1}, J)}{\partial (x, y)} = \frac{\partial (G_{2}, J)}{\partial (x, y)} =0 \Bigr\}. 
\]
In \cite[Proposition 2]{KS} and \cite[Proposition 2.2]{S}, 
Krzy\.{z}anowska and Szafraniec showed the following proposition: 
\begin{proposition}
The algebraic set $G'$ 
is empty if and only if 
the set of singularities of $g$ consists of either fold singularities or cusps. 
Moreover, the number of cusps of $g$ is equal to the number of 
$\{(x, y) \in U \mid J(x, y) = G_{1}(x, y) = G_{2}(x, y) = 0\}$.
\end{proposition}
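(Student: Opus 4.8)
The plan is to argue locally at an arbitrary point $p \in U$ and to translate the jet-transversality description of folds and cusps from Subsection 2.1 into the determinantal equations defining $G'$. Write $\Sigma := \{(x,y) \in U \mid J(x,y) = 0\}$ for the critical set and set $w := (J_y, -J_x)$, which is orthogonal to $\nabla J$ and hence tangent to the level curves of $J$. The computation underlying everything is the pair of identities
\[
G_i = \tfrac{\partial g_i}{\partial x}J_y - \tfrac{\partial g_i}{\partial y}J_x = dg_i(w),
\qquad
\tfrac{\partial(G_i, J)}{\partial(x,y)} = \tfrac{\partial G_i}{\partial x}J_y - \tfrac{\partial G_i}{\partial y}J_x = dG_i(w),
\]
valid for $i = 1, 2$. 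Thus $(G_1, G_2) = dg(w)$, and at a point of $\Sigma$ where $\nabla J \neq 0$ the vector $w$ spans $T_p\Sigma$, so $(G_1, G_2)$ is the velocity of the restriction $g|_\Sigma$, while the last two entries of $G'$ record the derivatives of $G_1, G_2$ along $\Sigma$.

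For the forward implication I would first show that a critical point with $\nabla J = 0$ automatically lies in $G'$: if $J(p) = 0$ and $J_x(p) = J_y(p) = 0$, then $w(p) = 0$, whence $G_1(p) = G_2(p) = dG_1(w)(p) = dG_2(w)(p) = 0$, so $p \in G'$. Moreover every corank-$2$ point (where $\operatorname{rank} dg_p = 0$) satisfies $\nabla J(p) = 0$, because each partial of $J = g_{1,x}g_{2,y} - g_{1,y}g_{2,x}$ is a sum of terms each carrying a first-order partial of $g_1$ or $g_2$ as a factor. Consequently $G' = \varnothing$ forces $\nabla J \neq 0$ along $\Sigma$; in particular $\Sigma$ is a smooth curve and every singularity has corank $1$. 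At such a $p$ the vector $w(p) \neq 0$ spans $T_p\Sigma$, and two cases arise. If $(G_1, G_2)(p) \neq 0$, then $dg(T_p\Sigma) \neq 0$, i.e.\ $\ker dg_p$ is transverse to $\Sigma$; this is the fold condition yielding the normal form $(x, \pm y^2)$. If $(G_1, G_2)(p) = 0$, then $\ker dg_p = T_p\Sigma$, and since $p \notin G'$ at least one of $dG_1(w)(p), dG_2(w)(p)$ is nonzero; this is the second-order nondegeneracy producing the cusp normal form $(x, y^3 + xy)$. Hence $G' = \varnothing$ implies that every singularity is a fold or a cusp.

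For the converse, and to pin down the nondegeneracy condition, I would verify the translation on the Whitney normal forms. A direct computation gives $(G_1, G_2) = (\pm 2, 0) \neq (0,0)$ for $(x, \pm y^2)$, and for $(x, y^3 + xy)$ one finds $(G_1, G_2) = (6y, 9y^2 + x)$, which vanishes at the origin, while $\tfrac{\partial(G_1, J)}{\partial(x,y)} = -6 \neq 0$ there. The loci $\{J = 0\}$, $\{J = G_1 = G_2 = 0\}$ and $G'$ are cut out by conditions intrinsic to the map germ (they describe, respectively, the critical set, the corank-$1$ points at which $\ker dg_p$ is tangent to $\Sigma$, and those same points at which the second-order test $dG_i(w)$ also degenerates); being diffeomorphism-invariant Thom--Boardman-type conditions, the normal-form computations transfer to arbitrary fold and cusp points. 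Thus if $g$ has only folds and cusps, the five defining functions of $G'$ never vanish simultaneously, so $G' = \varnothing$. The step I expect to be the main obstacle is making this invariance rigorous, i.e.\ showing that the abstract condition that $j^2 g$ is transversal to $S_1^2(X, Y)$ is exactly ``not both $dG_1(w)$ and $dG_2(w)$ vanish''; this requires unwinding the Thom--Boardman symbol $S_{1,1}$ in coordinates and matching it with the second derivative of $g|_\Sigma$, whereas the first-order conditions on $S_1$ and $S_2$ are comparatively routine.

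Finally, for the cusp count I would argue that when $G' = \varnothing$ the set $C := \{(x,y) \in U \mid J = G_1 = G_2 = 0\}$ is exactly the cusp locus. Indeed every cusp lies in $C$ by the characterization above, and conversely each point of $C$ has $\nabla J \neq 0$ (otherwise it would lie in $G'$), hence corank $1$ with $(G_1, G_2) = 0$ and $dG_i(w) \neq 0$ for some $i$, so it is a cusp. Each point $p \in C$ is isolated in $C$: since $\nabla J \neq 0$ the set $\Sigma$ is a smooth curve near $p$, and $dG_i(w)(p) \neq 0$ means the restriction $G_i|_\Sigma$ has nonvanishing derivative at $p$, so $p$ is an isolated zero of $G_i|_\Sigma$; as $C \subseteq \{G_i = 0\} \cap \Sigma$, it follows that $p$ is isolated in $C$. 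Therefore $C$ is discrete, each of its points counts as a single cusp, and the number of cusps of $g$ equals the cardinality of $C$, as claimed.
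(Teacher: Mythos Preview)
The paper does not prove this proposition: it is quoted from \cite[Proposition~2]{KS} and \cite[Proposition~2.2]{S} and stated without argument, so there is no ``paper's own proof'' to compare against. Your outline is therefore an independent attempt at a result the author simply imports.

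On its own merits your argument is essentially the standard one and is sound in structure. The identifications $G_i = dg_i(w)$ and $\tfrac{\partial(G_i,J)}{\partial(x,y)} = dG_i(w)$ with $w=(J_y,-J_x)$ are correct and do exactly what you want: they translate the determinantal conditions into statements about the restriction $g|_\Sigma$ and its derivative along $\Sigma$. Your observation that $\nabla J(p)=0$ forces $p\in G'$, and that corank-$2$ points have $\nabla J=0$, correctly reduces everything to the corank-$1$ locus where $\Sigma$ is smooth. The fold case is then Whitney's criterion verbatim.

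Two small points. First, a computational slip: for the cusp model $(x,y^3+xy)$ one has $J=3y^2+x$, hence $G_2 = y\cdot 6y - (3y^2+x)\cdot 1 = 3y^2 - x$, not $9y^2+x$; this does not affect your conclusion since both vanish at the origin. Second, the gap you yourself flag---that the vanishing of $dG_1(w)$ and $dG_2(w)$ at a $\Sigma^{1,1}$-point is \emph{exactly} the failure of $j^2g$ to be transverse to $S_1^2$---is real but routine: parametrize $\Sigma$ near $p$ by arclength (or by $w$), note that $(G_1,G_2)$ is, up to a nonvanishing scalar, the velocity of $g|_\Sigma$, and that the cusp normal form is characterized by $g|_\Sigma$ having a simple zero of its derivative. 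The coordinate-invariance you worry about is handled by observing that under source and target diffeomorphisms $J$ is multiplied by a nonvanishing function, so $\Sigma$, $T\Sigma$, $\ker dg$, and the order of vanishing of $g|_\Sigma$ are all intrinsic; the determinantal expressions $G_i$ and $\tfrac{\partial(G_i,J)}{\partial(x,y)}$ change by invertible linear combinations and nonzero factors, so the loci they cut out are preserved. With that paragraph added your proof would be complete.
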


\subsection{Multiplicity with sign}
Set $z = x+iy$. Then a pair of real polynomials $(g_{1}, g_{2})$ defines a mixed polynomial $g(z, \bar{z})$ as follows: 
\begin{equation*}
\begin{split}
g(z, \bar{z}) &= g_{1}(x, y) + ig_{2}(x, y) \\
              &= g_{1}\Bigl(\frac{z + \bar{z}}{2},  \frac{z - \bar{z}}{2i}\Bigr) + ig_{2}\Bigl(\frac{z + \bar{z}}{2},  \frac{z - \bar{z}}{2i}\Bigr). 
\end{split}
\end{equation*}
Then $\frac{\partial g}{\partial z}$ and $\frac{\partial g}{\partial \bar{z}}$ satisfy the following equations: 
\begin{equation*}
\begin{split}
\frac{\partial g}{\partial z} &= 
\frac{1}{2}\Bigl(\frac{\partial g}{\partial x} - i\frac{\partial g}{\partial y}\Bigr) 
= \frac{1}{2}\Bigl(\frac{\partial g_{1}}{\partial x} + \frac{\partial g_{2}}{\partial y}\Bigr) 
+ \frac{i}{2}\Bigl(\frac{\partial g_{2}}{\partial x} - \frac{\partial g_{1}}{\partial y}\Bigr), \\ 
\frac{\partial g}{\partial \bar{z}} &= \frac{1}{2}\Bigl(\frac{\partial g}{\partial x} + i\frac{\partial g}{\partial y}\Bigr) 
= \frac{1}{2}\Bigl(\frac{\partial g_{1}}{\partial x} - \frac{\partial g_{2}}{\partial y}\Bigr) 
+ \frac{i}{2}\Bigl(\frac{\partial g_{2}}{\partial x} + \frac{\partial g_{1}}{\partial y}\Bigr).
\end{split}
\end{equation*}
Suppose that $w$ is a mixed singularity of a mixed polynomial $g$, i.e., 
the gradient vectors of $g_{1}$ and $g_{2}$ at $w$ 
are linearly dependent. Then we have 
\[
\Bigl\lvert \frac{\partial g}{\partial z}(w)\Bigr\rvert = \Bigl\lvert \frac{\partial g}{\partial \bar{z}}(w)\Bigr\rvert, 
\]
see \cite{O}. Let $\alpha \in \Bbb{C}$ be an isolated root of $g(z, \bar{z})=0$. 
Put 
\[
S^{1}_{\varepsilon}(\alpha) := \{ z \in \Bbb{C} \mid \lvert z - \alpha \rvert = \varepsilon \}, 
\]
where $\varepsilon$ is a sufficiently small positive real number. 
We define the \textit{multiplicity with the sign of the root $\alpha$} by the mapping degree of the normalized function 
\[
\frac{g}{\lvert g\rvert} : S^{1}_{\varepsilon}(\alpha) \rightarrow S^{1}. 
\]
We denote the multiplicity with the sign of the root $\alpha$ by $m_{s}(g, \alpha)$. 

We say that $\alpha$ is a \textit{positive simple root} if $\alpha$ satisfies 
\[
\Bigl\lvert \frac{\partial g}{\partial z}(\alpha)\Bigr\rvert > 
\Bigl\lvert \frac{\partial g}{\partial \bar{z}}(\alpha)\Bigr\rvert. 
\]
Similarly, $\alpha$ is a \textit{negative simple root} if $\alpha$ satisfies 
\[
\Bigl\lvert \frac{\partial g}{\partial z}(\alpha)\Bigr\rvert < 
\Bigl\lvert \frac{\partial g}{\partial \bar{z}}(\alpha)\Bigr\rvert. 
\]
In \cite[Proposition 15]{O}, $\alpha$ is a positive (resp. negative) simple root 
if and only if $m_{s}(g, \alpha) = 1$ (resp. $m_{s}(g, \alpha) = -1$). 

Consider a bifurcation family $g_{t}(z, \bar{z})=0$ for $g_{0} = g$ and $t \in \Bbb{R}$. 
Let $\{P_{1}(t), \dots, P_{\nu}(t)\}$ 
be the roots of $g_{t}(z, \bar{z})=0$ which are bifurcating from $z=\alpha$. 
Then we have 
\[
\textstyle\sum_{j=1}^{\nu}m_{s}(g_{t}, P_{j}(t)) = m_{s}(g, \alpha), 
\]
see \cite[Proposition 16]{O}.

\section{The existence of linear deformations which are excellent maps}
Let $f(z)$ be a complex polynomial.
Assume that $f(0)=0$ and the origin of $\Bbb{C}$ is a singularity of $f$. 
Set $f_{1} = \Re f$ and $f_{2} = \Im f$. 
We take $a, b \in \Bbb{R}$. Then 
a linear deformation $f_t$ of $f$ is defined by 
$f_{t}(z) = f(z) + t(a+ib)\bar{z}$, where 
$0 < \lvert t \rvert \ll 1$. 
Note that $f_t$ is equal to 
\begin{equation*}
\begin{split}
f_{t}(z) &= f(z) + t(a+ib)\bar{z} \\
         &= f_{1}(z) + t(ax+by) + i\{f_{2}(z) + t(bx-ay)\}. 
\end{split}
\end{equation*}
Then $f_t$ defines a real polynomial map from $\Bbb{R}^{2}$ to $\Bbb{R}^{2}$ as follows: 
\[
f_{t} : \Bbb{R}^{2} \rightarrow \Bbb{R}^{2}, \ \ \ (x, y) \mapsto (f_{1}(x,y) + t(ax+by), f_{2}(x,y) + t(bx-ay)).
\]
We calculate $J, G_{1}$ and $G_{2}$ of $f_{t}$. 
By the Cauchy--Riemann equations 
$\frac{\partial f_{2}}{\partial x} = -\frac{\partial f_{1}}{\partial y}$ and 
$\frac{\partial f_{2}}{\partial y} = \frac{\partial f_{1}}{\partial x}$, 
$J$ is modified as 
\begin{equation*}
\begin{split}
J &= \det \begin{pmatrix}
         \frac{\partial f_{1}}{\partial x} + ta & \frac{\partial f_{1}}{\partial y} + tb \\
         \frac{\partial f_{2}}{\partial x} + tb & \frac{\partial f_{2}}{\partial y} - ta
         \end{pmatrix} \\
  &= \det \begin{pmatrix}
         \frac{\partial f_{1}}{\partial x} + ta & \frac{\partial f_{1}}{\partial y} + tb \\
         -\frac{\partial f_{1}}{\partial y} + tb & \frac{\partial f_{1}}{\partial x} - ta
         \end{pmatrix} \\
  &= \Bigl(\frac{\partial f_{1}}{\partial x}\Bigr)^{2} + 
  \Bigl(\frac{\partial f_{1}}{\partial y}\Bigr)^{2} - t^{2}(a^{2} + b^{2})\\
  &= \Bigl\lvert \frac{\partial f}{\partial z}\Bigr\rvert^{2} - t^{2}(a^{2} + b^{2}).
\end{split}
\end{equation*}
Since $f$ is a harmonic function, 
$\frac{\partial f_{1}}{\partial x\partial x} = -\frac{\partial f_{1}}{\partial y\partial y}$. 
Then we have 
\begin{equation*}
\begin{split}
G_{1} &= \det \begin{pmatrix}
         \frac{\partial f_{1}}{\partial x} + ta & \frac{\partial f_{1}}{\partial y} + tb \\
         \frac{\partial J}{\partial x}  & \frac{\partial J}{\partial y}
         \end{pmatrix} \\
  &= 2\Bigl(\frac{\partial f_{1}}{\partial x} + ta\Bigr)
  \Bigl(\frac{\partial f_{1}}{\partial x}\frac{\partial^{2} f_{1}}{\partial x \partial y} 
+ \frac{\partial f_{1}}{\partial y}\frac{\partial^{2} f_{1}}{\partial y \partial y}\Bigr)  
  -2\Bigl(\frac{\partial f_{1}}{\partial y} + tb\Bigr)
  \Bigl(-\frac{\partial f_{1}}{\partial x}\frac{\partial^{2} f_{1}}{\partial y \partial y} 
+ \frac{\partial f_{1}}{\partial y}\frac{\partial^{2} f_{1}}{\partial x \partial y}\Bigr) \\
&= 2\biggl(\Bigl(\frac{\partial f_{1}}{\partial x}\Bigr)^{2} - \Bigl(\frac{\partial f_{1}}{\partial y}\Bigr)^{2}\biggr)\frac{\partial^{2} f_{1}}{\partial x \partial y} 
+ 4\frac{\partial f_{1}}{\partial x}\frac{\partial f_{1}}{\partial y}\frac{\partial^{2} f_{1}}{\partial y \partial y}  \\
&+ 2t\biggl\{a\Bigl(\frac{\partial f_{1}}{\partial x}\frac{\partial^{2} f_{1}}{\partial x \partial y} + 
\frac{\partial f_{1}}{\partial y}\frac{\partial^{2} f_{1}}{\partial y \partial y}\Bigr) 
-b\Bigl(-\frac{\partial f_{1}}{\partial x}\frac{\partial^{2} f_{1}}{\partial y \partial y} + 
\frac{\partial f_{1}}{\partial y}\frac{\partial^{2} f_{1}}{\partial x \partial y}\Bigr)\biggr\}.
\end{split}
\end{equation*}
By the same argument, $G_2$ is equal to 
\begin{equation*}
\begin{split}
G_{2} &= \det \begin{pmatrix}
         -\frac{\partial f_{1}}{\partial y} + tb & \frac{\partial f_{1}}{\partial x} - ta \\
         \frac{\partial J}{\partial x}  & \frac{\partial J}{\partial y}
         \end{pmatrix}\\
   &=2\Bigl(-\frac{\partial f_{1}}{\partial y} + tb\Bigr)
  \Bigl(\frac{\partial f_{1}}{\partial x}\frac{\partial^{2} f_{1}}{\partial x \partial y} 
+ \frac{\partial f_{1}}{\partial y}\frac{\partial^{2} f_{1}}{\partial y \partial y}\Bigr)  
  -2\Bigl(\frac{\partial f_{1}}{\partial x} - ta\Bigr)
  \Bigl(-\frac{\partial f_{1}}{\partial x}\frac{\partial^{2} f_{1}}{\partial y \partial y} 
+ \frac{\partial f_{1}}{\partial y}\frac{\partial^{2} f_{1}}{\partial x \partial y}\Bigr) \\
&= 2\biggl(\Bigl(\frac{\partial f_{1}}{\partial x}\Bigr)^{2} - \Bigl(\frac{\partial f_{1}}{\partial y}\Bigr)^{2}\biggr)\frac{\partial^{2} f_{1}}{\partial y \partial y} 
- 4\frac{\partial f_{1}}{\partial x}\frac{\partial f_{1}}{\partial y}\frac{\partial^{2} f_{1}}{\partial x \partial y}  \\
&+ 2t\biggl\{a\Bigl(-\frac{\partial f_{1}}{\partial x}\frac{\partial^{2} f_{1}}{\partial y \partial y} + 
\frac{\partial f_{1}}{\partial y}\frac{\partial^{2} f_{1}}{\partial x \partial y}\Bigr) 
+b\Bigl(\frac{\partial f_{1}}{\partial x}\frac{\partial^{2} f_{1}}{\partial x \partial y} + 
\frac{\partial f_{1}}{\partial y}\frac{\partial^{2} f_{1}}{\partial y \partial y}\Bigr)\biggr\}. 
\end{split}
\end{equation*}
If $G_{1}$ and $G_{2}$ are equal to $0$ at $(x, y)$, then $(x, y)$ satisfies the following equation: 
\begin{equation*}
\begin{split}
&\biggl\{\biggl(\Bigl(\frac{\partial f_{1}}{\partial x}\Bigr)^{2} - \Bigl(\frac{\partial f_{1}}{\partial y}\Bigr)^{2}\biggr)\frac{\partial^{2} f_{1}}{\partial x \partial y} 
+ 2\frac{\partial f_{1}}{\partial x}\frac{\partial f_{1}}{\partial y}\frac{\partial^{2} f_{1}}{\partial y \partial y}\biggr\} \\
\times &\biggl\{a\Bigl(-\frac{\partial f_{1}}{\partial x}\frac{\partial^{2} f_{1}}{\partial y \partial y} + 
\frac{\partial f_{1}}{\partial y}\frac{\partial^{2} f_{1}}{\partial x \partial y}\Bigr) + 
b\Bigl(\frac{\partial f_{1}}{\partial x}\frac{\partial^{2} f_{1}}{\partial x \partial y} + 
\frac{\partial f_{1}}{\partial y}\frac{\partial^{2} f_{1}}{\partial y \partial y}\Bigr)\biggr\} \\ 
= &\biggl\{\biggl(\Bigl(\frac{\partial f_{1}}{\partial x}\Bigr)^{2} - \Bigl(\frac{\partial f_{1}}{\partial y}\Bigr)^{2}\biggr)\frac{\partial^{2} f_{1}}{\partial y \partial y} 
- 2\frac{\partial f_{1}}{\partial x}\frac{\partial f_{1}}{\partial y}\frac{\partial^{2} f_{1}}{\partial x \partial y}\biggr\} \\
\times &\biggl\{a\Bigl(\frac{\partial f_{1}}{\partial x}\frac{\partial^{2} f_{1}}{\partial x \partial y} + 
\frac{\partial f_{1}}{\partial y}\frac{\partial^{2} f_{1}}{\partial y \partial y}\Bigr) - 
b\Bigl(-\frac{\partial f_{1}}{\partial x}\frac{\partial^{2} f_{1}}{\partial y \partial y} + 
\frac{\partial f_{1}}{\partial y}\frac{\partial^{2} f_{1}}{\partial x \partial y}\Bigr)\biggr\}.
\end{split}
\end{equation*}
Hence we have 
\begin{equation}
\begin{split}
a\biggl[&\biggl(-3\Bigl(\frac{\partial f_{1}}{\partial x}\Bigr)^{2} + \Bigl(\frac{\partial f_{1}}{\partial y}\Bigr)^{2}\biggr)
\frac{\partial f_{1}}{\partial y}\biggl\{\Bigl(\frac{\partial^{2} f_{1}}{\partial y\partial y}\Bigr)^{2} - \Bigl(\frac{\partial^{2} f_{1}}{\partial x\partial y}\Bigr)^{2}\biggr\} \\
&+2\biggl(-\Bigl(\frac{\partial f_{1}}{\partial x}\Bigr)^{2} + 3\Bigl(\frac{\partial f_{1}}{\partial y}\Bigr)^{2}\biggr)
\frac{\partial f_{1}}{\partial x}\frac{\partial^{2} f_{1}}{\partial x\partial y}
\frac{\partial^{2} f_{1}}{\partial y\partial y}\biggr] \\
+ 
b\biggl[&\biggl(-\Bigl(\frac{\partial f_{1}}{\partial x}\Bigr)^{2} + 3\Bigl(\frac{\partial f_{1}}{\partial y}\Bigr)^{2}\biggr)
\frac{\partial f_{1}}{\partial x}
\biggl\{\Bigl(\frac{\partial^{2} f_{1}}{\partial y\partial y}\Bigr)^{2} - \Bigl(\frac{\partial^{2} f_{1}}{\partial x\partial y}\Bigr)^{2}\biggr\}\\ 
&-2\biggl(-3\Bigl(\frac{\partial f_{1}}{\partial x}\Bigr)^{2} + \Bigl(\frac{\partial f_{1}}{\partial y}\Bigr)^{2}\biggr)
\frac{\partial f_{1}}{\partial y}
\frac{\partial^{2} f_{1}}{\partial x\partial y}\frac{\partial^{2} f_{1}}{\partial y\partial y}\biggr] \\
&= 0. 
\end{split}
\end{equation}
Set real polynomials $\phi_{1}, \phi_{2}$ and $\Phi$ as follows: 
\begin{equation*}
\begin{split}
\phi_{1} := &\biggl(-3\Bigl(\frac{\partial f_{1}}{\partial x}\Bigr)^{2} + \Bigl(\frac{\partial f_{1}}{\partial y}\Bigr)^{2}\biggr)
\frac{\partial f_{1}}{\partial y}\biggl\{\Bigl(\frac{\partial^{2} f_{1}}{\partial y\partial y}\Bigr)^{2} - \Bigl(\frac{\partial^{2} f_{1}}{\partial x\partial y}\Bigr)^{2}\biggr\} \\
&+2\biggl(-\Bigl(\frac{\partial f_{1}}{\partial x}\Bigr)^{2} + 3\Bigl(\frac{\partial f_{1}}{\partial y}\Bigr)^{2}\biggr)
\frac{\partial f_{1}}{\partial x}\frac{\partial^{2} f_{1}}{\partial x\partial y}\frac{\partial^{2} f_{1}}{\partial y\partial y},   \\
\phi_{2} := &\biggl(-\Bigl(\frac{\partial f_{1}}{\partial x}\Bigr)^{2} + 3\Bigl(\frac{\partial f_{1}}{\partial y}\Bigr)^{2}\biggr)
\frac{\partial f_{1}}{\partial x}
\biggl\{\Bigl(\frac{\partial^{2} f_{1}}{\partial y\partial y}\Bigr)^{2} - \Bigl(\frac{\partial^{2} f_{1}}{\partial x\partial y}\Bigr)^{2}\biggr\} \\
&-2\biggl(-3\Bigl(\frac{\partial f_{1}}{\partial x}\Bigr)^{2} + \Bigl(\frac{\partial f_{1}}{\partial y}\Bigr)^{2}\biggr)
\frac{\partial f_{1}}{\partial y}
\frac{\partial^{2} f_{1}}{\partial x\partial y}\frac{\partial^{2} f_{1}}{\partial y\partial y}, \\ 
\Phi &:= a\phi_{1} + b\phi_{2}. 
\end{split}
\end{equation*}
Suppose that $G_{1}$ and $G_{2}$ are equal to $0$ at $(x, y)$. 
By equation $(1)$ and the definitions of $\phi_{1}, \phi_{2}$ and $\Phi$, $\Phi(x, y)$ is also equal to $0$. 
To show the existence of linear deformations which are excellent maps, 
we consider the intersection of $\phi_{1}^{-1}(0)$ and $\phi_{2}^{-1}(0)$. 
\begin{lemma}
Let $U$ be a sufficiently small neighborhood of the origin $0$ of $\Bbb{C}$.
Assume that $U$ satisfies 
$\{w \in U \mid \frac{\partial f}{\partial z}(w)=~0\} = \{0\}$ and 
$\{w \in U \mid \frac{\partial^{2} f}{\partial z\partial z}(w)=0\} \subset \{0\}$. 
Then the intersection of $\phi_{1}^{-1}(0), \phi_{2}^{-1}(0)$ and $U$ is equal to~$\{0\}$. 
\end{lemma}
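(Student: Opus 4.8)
The plan is to translate the pair $(\phi_1,\phi_2)$ into complex notation, where it will turn out to be, up to a unimodular factor, the product of a power of $\frac{\partial f}{\partial z}$ and a power of $\frac{\partial^2 f}{\partial z\partial z}$; the lemma then follows at once from the two hypotheses on $U$.

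First I would rewrite the holomorphic derivatives of $f$ in terms of $f_1=\Re f$. Since $f$ is holomorphic, the Cauchy--Riemann equations give
\[
\frac{\partial f}{\partial z}=\frac{\partial f_1}{\partial x}-i\,\frac{\partial f_1}{\partial y},
\]
and differentiating once more, together with the harmonicity relation $\frac{\partial^2 f_1}{\partial x\partial x}=-\frac{\partial^2 f_1}{\partial y\partial y}$, gives
\[
\frac{\partial^2 f}{\partial z\partial z}=-\frac{\partial^2 f_1}{\partial y\partial y}-i\,\frac{\partial^2 f_1}{\partial x\partial y}.
\]
Abbreviate $p=\frac{\partial f_1}{\partial x}$, $q=\frac{\partial f_1}{\partial y}$, $r=\frac{\partial^2 f_1}{\partial y\partial y}$, $s=\frac{\partial^2 f_1}{\partial x\partial y}$, so that $\frac{\partial f}{\partial z}=p-iq$ and $\frac{\partial^2 f}{\partial z\partial z}=-(r+is)$.

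Next I would form $\phi_2+i\phi_1$ and regroup it according to the second derivatives. Collecting the coefficient of $r^2-s^2$ and the coefficient of $2rs$, one checks that the latter is exactly $i$ times the former, so that
\[
\phi_2+i\phi_1=\bigl[(-p^3+3pq^2)+i(-3p^2q+q^3)\bigr]\,(r+is)^2=-(p+iq)^3\,(r+is)^2,
\]
the last equality being a one-line expansion of $(p+iq)^3$. In the notation above this reads
\[
\phi_2+i\phi_1=-\left(\overline{\frac{\partial f}{\partial z}}\right)^{3}\left(\frac{\partial^2 f}{\partial z\partial z}\right)^{2},
\]
and hence $|\phi_2+i\phi_1|=\left|\frac{\partial f}{\partial z}\right|^{3}\left|\frac{\partial^2 f}{\partial z\partial z}\right|^{2}$.

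It follows that a point $w$ lies in $\phi_1^{-1}(0)\cap\phi_2^{-1}(0)$ if and only if $\frac{\partial f}{\partial z}(w)=0$ or $\frac{\partial^2 f}{\partial z\partial z}(w)=0$. Intersecting with $U$ and using the hypotheses $\{w\in U\mid \frac{\partial f}{\partial z}(w)=0\}=\{0\}$ and $\{w\in U\mid \frac{\partial^2 f}{\partial z\partial z}(w)=0\}\subseteq\{0\}$ gives $\phi_1^{-1}(0)\cap\phi_2^{-1}(0)\cap U\subseteq\{0\}$; the reverse inclusion holds because the first hypothesis forces $\frac{\partial f}{\partial z}(0)=0$, so $0\in\phi_1^{-1}(0)\cap\phi_2^{-1}(0)$. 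I expect the only genuinely non-routine point to be the regrouping in the middle step: one has to keep the signs straight so that the first-derivative factor comes out as $-(p+iq)^3$, i.e.\ as $-\bigl(\overline{\partial f/\partial z}\bigr)^{3}$ rather than $-\bigl(\partial f/\partial z\bigr)^{3}$, but this is just a matter of matching the real and imaginary parts of a cubic, and it can be sanity-checked on $f(z)=z^2$.
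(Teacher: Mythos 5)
Your proof is correct, and it takes a genuinely different route from the paper. The paper argues by real-variable case analysis: it splits into cases according to the vanishing of $\frac{\partial f_{1}}{\partial x}$, $\frac{\partial f_{1}}{\partial y}$ and of the factors $\bigl(\frac{\partial f_{1}}{\partial x}\bigr)^{2}-3\bigl(\frac{\partial f_{1}}{\partial y}\bigr)^{2}$, $3\bigl(\frac{\partial f_{1}}{\partial x}\bigr)^{2}-\bigl(\frac{\partial f_{1}}{\partial y}\bigr)^{2}$, and in the generic case solves the two equations $\phi_{1}=\phi_{2}=0$ for $2\frac{\partial^{2} f_{1}}{\partial x\partial y}\frac{\partial^{2} f_{1}}{\partial y\partial y}$, cross-multiplies to force $\bigl(\frac{\partial^{2} f_{1}}{\partial y\partial y}\bigr)^{2}-\bigl(\frac{\partial^{2} f_{1}}{\partial x\partial y}\bigr)^{2}=0$ and then the vanishing of both second derivatives, after which the hypotheses on $U$ finish the argument. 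You instead prove the single exact identity $\phi_{2}+i\phi_{1}=-\bigl(\overline{\partial f/\partial z}\bigr)^{3}\bigl(\partial^{2} f/\partial z\partial z\bigr)^{2}$ (I checked the regrouping: with $p,q,r,s$ as you define them, the coefficient of $2rs$ in $\phi_{2}+i\phi_{1}$ is indeed $i$ times the coefficient of $r^{2}-s^{2}$, the cubic factor is $-(p+iq)^{3}$, and $p+iq=\overline{\partial f/\partial z}$, $r+is=-\partial^{2}f/\partial z\partial z$; the test on $f(z)=z^{2}$ confirms the signs), so that $\phi_{1}^{-1}(0)\cap\phi_{2}^{-1}(0)$ is exactly the union of the zero sets of $\partial f/\partial z$ and $\partial^{2}f/\partial z\partial z$, and the lemma, including the inclusion of the origin, follows immediately from the two hypotheses. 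Your approach buys several things: it eliminates the case analysis entirely, it yields a sharper global statement (an identity valid everywhere, not only a set-theoretic conclusion inside $U$), and it makes transparent why precisely these two hypotheses on $U$ are the right ones; it also fits naturally with the later computation of $G_{t}$ in the paper, where the same product $\bigl(\partial f/\partial z\bigr)^{2}\overline{\partial^{2}f/\partial z\partial z}$ governs the zero set.
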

\begin{proof}
Let $(x, y)$ be a point of $\phi_{1}^{-1}(0) \cap \phi_{2}^{-1}(0)$. 
Assume that $(x, y)$ satisfies $\frac{\partial f_{1}}{\partial x}(x,y) = 0$ or 
$\Bigl(\frac{\partial f_{1}}{\partial x}(x,y)\Bigr)^{2} - 3\Bigl(\frac{\partial f_{1}}{\partial y}(x,y)\Bigr)^{2} = 0$. 
Then we have 
\begin{equation*}
\begin{split}
\Bigl(\frac{\partial f_{1}}{\partial y}(x,y)\Bigr)^{3}\biggl\{\Bigl(\frac{\partial^{2} f_{1}}{\partial y\partial y}(x,y)\Bigr)^{2} - \Bigl(\frac{\partial^{2} f_{1}}{\partial x\partial y}(x,y)\Bigr)^{2}\biggr\} &= 0, \\
\Bigl(\frac{\partial f_{1}}{\partial y}(x,y)\Bigr)^{3}\frac{\partial^{2} f_{1}}{\partial x\partial y}(x,y)\frac{\partial^{2} f_{1}}{\partial y\partial y}(x,y) &=0.
\end{split}
\end{equation*}
By the above equations, $(x, y)$ satisfies $\frac{\partial f_{1}}{\partial y}(x,y) = 0$ or 
$\frac{\partial^{2} f_{1}}{\partial x\partial y}(x,y) = \frac{\partial^{2} f_{1}}{\partial y\partial y}(x,y) = 0$. 
So $(x,y)$ belongs to 
$\{ (x, y) \in U \mid \frac{\partial f_{1}}{\partial x}(x,y) = \frac{\partial f_{1}}{\partial y}(x,y) = 0 \}$ or 
$\{ (x, y) \in U \mid \frac{\partial^{2} f_{1}}{\partial x\partial y}(x,y) = \frac{\partial^{2} f_{1}}{\partial y\partial y}(x,y) = 0\}$. 
By the assumption of $U$, $(x, y)$ is equal to~$0$. 
Suppose that $(x, y)$ satisfies $\frac{\partial f_{1}}{\partial y}(x,y) = 0$ or 
$-3\Bigl(\frac{\partial f_{1}}{\partial x}(x,y)\Bigr)^{2} + \Bigl(\frac{\partial f_{1}}{\partial y}(x,y)\Bigr)^{2} = 0$. 
By the same argument, we can check that $(x,y)$ is equal to~$0$. 

We assume that $(x, y)$ satisfies 
$\frac{\partial f_{1}}{\partial x}\frac{\partial f_{1}}{\partial y} \neq 0$ and 
$\biggl\{\Bigl(\frac{\partial f_{1}}{\partial x}\Bigr)^{2} - 3\Bigl(\frac{\partial f_{1}}{\partial y}\Bigr)^{2}\biggr\} 
\biggl\{3\Bigl(\frac{\partial f_{1}}{\partial x}\Bigr)^{2} - \Bigl(\frac{\partial f_{1}}{\partial y}\Bigr)^{2}\biggr\} \neq 0$ 
on $U$. 
Then $(x, y) \in \phi_{1}^{-1}(0) \cap \phi_{2}^{-1}(0)$ satisfies 
\begin{equation}
\begin{split}
2\frac{\partial^{2} f_{1}}{\partial x\partial y}\frac{\partial^{2} f_{1}}{\partial y\partial y} &= 
\frac{\biggl(-3\Bigl(\frac{\partial f_{1}}{\partial x}\Bigr)^{2} + 
\Bigl(\frac{\partial f_{1}}{\partial x}\Bigr)^{2}\biggr)\frac{\partial f_{1}}{\partial y}\biggl\{\Bigl(\frac{\partial^{2} f_{1}}{\partial y\partial y}\Bigr)^{2} - \Bigl(\frac{\partial^{2} f_{1}}{\partial x\partial y}\Bigr)^{2}\biggr\}}
{-\biggl(-\Bigl(\frac{\partial f_{1}}{\partial x}\Bigr)^{2} + 3\Bigl(\frac{\partial f_{1}}{\partial y}\Bigr)^{2}\biggr)\frac{\partial f_{1}}{\partial x}}, \\
2\frac{\partial^{2} f_{1}}{\partial x\partial y}\frac{\partial^{2} f_{1}}{\partial y\partial y} &= 
\frac{\biggl(-\Bigl(\frac{\partial f_{1}}{\partial x}\Bigr)^{2} + 
3\Bigl(\frac{\partial f_{1}}{\partial y}\Bigr)^{2}\biggr)\frac{\partial f_{1}}{\partial x}\biggl\{\Bigl(\frac{\partial^{2} f_{1}}{\partial y\partial y}\Bigr)^{2} - \Bigl(\frac{\partial^{2} f_{1}}{\partial x\partial y}\Bigr)^{2}\biggr\}}
{\biggl(-3\Bigl(\frac{\partial f_{1}}{\partial x}\Bigr)^{2} + \Bigl(\frac{\partial f_{1}}{\partial y}\Bigr)^{2}\biggr)\frac{\partial f_{1}}{\partial y}}. 
\end{split}
\end{equation}
By the above equations, $(x, y)$ satisfies the following equation: 
\begin{equation*}
\begin{split}
&\Biggl\{\biggl(-3\Bigl(\frac{\partial f_{1}}{\partial x}\Bigr)^{2} + 
\Bigl(\frac{\partial f_{1}}{\partial y}\Bigr)^{2}\biggr)^{2}\Bigl(\frac{\partial f_{1}}{\partial y}\Bigr)^{2}
+\biggl(-\Bigl(\frac{\partial f_{1}}{\partial x}\Bigr)^{2} + 3\Bigl(\frac{\partial f_{1}}{\partial y}\Bigr)^{2}\biggr)^{2}\Bigl(\frac{\partial f_{1}}{\partial x}\Bigr)^{2}\Biggr\} \\ 
&\times
\biggl\{\Bigl(\frac{\partial^{2} f_{1}}{\partial y\partial y}\Bigr)^{2} - \Bigl(\frac{\partial^{2} f_{1}}{\partial x\partial y}\Bigr)^{2}\biggr\} 
= 0. 
\end{split}
\end{equation*}
Thus 
$\Bigl(\frac{\partial^{2} f_{1}}{\partial y\partial y}(x,y)\Bigr)^{2} - \Bigl(\frac{\partial^{2} f_{1}}{\partial x\partial y}(x,y)\Bigr)^{2}$ 
is equal to $0$. 
By equation $(2)$, the second differentials 
$\frac{\partial^{2} f_{1}}{\partial y\partial y}(x,y)$ and $\frac{\partial^{2} f_{1}}{\partial x\partial y}(x,y)$ 
of $f_{1}$ are equal to $0$. 
By the assumption of $U$, 
the intersection $\phi_{1}^{-1}(0) \cap \phi_{2}^{-1}(0) \cap U$ is equal to $\{0\}$. 
\end{proof}

To study singularities of $f_t$, we define the mixed polynomial $G_{t}$ as follows: 
\begin{equation*}
\begin{split}
G_{t} &:= G_{1} + iG_{2} \\
  &=  \det \begin{pmatrix}
         \frac{\partial f_{1}}{\partial x} + ta & \frac{\partial f_{1}}{\partial y} + tb \\   
         \frac{\partial J}{\partial x}  & \frac{\partial J}{\partial y}
         \end{pmatrix}
         + i\det \begin{pmatrix}
         -\frac{\partial f_{1}}{\partial y} + tb & \frac{\partial f_{1}}{\partial x} - ta \\
         \frac{\partial J}{\partial x}  & \frac{\partial J}{\partial y} 
         \end{pmatrix}\\
  &= \biggl(\frac{\partial f_{1}}{\partial x} + ta + i\Bigl(-\frac{\partial f_{1}}{\partial y} + tb\Bigr)\biggr)\frac{\partial J}{\partial y} - 
     \biggl(\frac{\partial f_{1}}{\partial y} + tb + i\Bigl(\frac{\partial f_{1}}{\partial x} - ta\Bigr)\biggr)\frac{\partial J}{\partial x} \\
    &= \Bigl(\frac{\partial f}{\partial z} + t(a+ib)\Bigr)\frac{\partial J}{\partial y} - 
  i\Bigl(\frac{\partial f}{\partial z} - t(a+ib)\Bigr)\frac{\partial J}{\partial x}. 
\end{split}
\end{equation*}
Since $\frac{\partial J}{\partial z}$ is equal to 
$\frac{1}{2}\Bigl(\frac{\partial J}{\partial x} - i\frac{\partial J}{\partial y}\Bigr)$, 
$\frac{\partial J}{\partial x}$ and $\frac{\partial J}{\partial y}$ are equal to 
\begin{equation*}
\begin{split}
\frac{\partial J}{\partial x} &= 2\Re \frac{\partial J}{\partial z} 
= 2\Re \frac{\partial^{2} f}{\partial z \partial z}\overline{\frac{\partial f}{\partial z}} = 
\frac{\partial^{2} f}{\partial z \partial z}\overline{\frac{\partial f}{\partial z}} 
+ \overline{\frac{\partial^{2} f}{\partial z \partial z}}\frac{\partial f}{\partial z},  \\
\frac{\partial J}{\partial y} &= -2\Im \frac{\partial J}{\partial z} = 
-2\Im \frac{\partial^{2} f}{\partial z \partial z}\overline{\frac{\partial f}{\partial z}} 
= i\Bigl(\frac{\partial^{2} f}{\partial z \partial z}\overline{\frac{\partial f}{\partial z}} 
- \overline{\frac{\partial^{2} f}{\partial z \partial z}}\frac{\partial f}{\partial z}\Bigr), 
\end{split}
\end{equation*}
where $z = x+iy$. 
Thus $G_{t}$ is equal to 
\begin{equation*}
\begin{split}
  &i\Bigl(\frac{\partial f}{\partial z} + t(a+ib)\Bigr)\Bigl(\frac{\partial^{2} f}{\partial z \partial z}\overline{\frac{\partial f}{\partial z}} 
- \overline{\frac{\partial^{2} f}{\partial z \partial z}}\frac{\partial f}{\partial z}\Bigr) - 
i\Bigl(\frac{\partial f}{\partial z} - t(a+ib)\Bigr)\Bigl( \frac{\partial^{2} f}{\partial z \partial z}\overline{\frac{\partial f}{\partial z}} 
+ \overline{\frac{\partial^{2} f}{\partial z \partial z}}\frac{\partial f}{\partial z}\Bigr) \\
 &= -2i\Bigl(\frac{\partial f}{\partial z}\Bigr)^{2}\overline{\frac{\partial^{2} f}{\partial z \partial z}} 
 + 2ti(a+ib)\frac{\partial^{2} f}{\partial z \partial z}\overline{\frac{\partial f}{\partial z}}.
\end{split}
\end{equation*}
Suppose that $z$ satisfies $G_{t}(z)=0$ and 
$\frac{\partial f}{\partial z}(z)\frac{\partial^{2} f}{\partial z \partial z}(z) \neq 0$. 
By the above equation, $z$ satisfies $J(z) = 0$. 
Since the multiplicity $k$ of $f$ at the origin is greater than $1$, 
$G_{t}(0) = 0$ and 
$\frac{\partial f}{\partial z}(0)\frac{\partial^{2} f}{\partial z \partial z}(0) = 0$. 
Thus we have 
\begin{equation*}
\begin{split} 
&\biggl\{z \in U \mid G_{t}(z)=0, \frac{\partial f}{\partial z}(z) \neq 0,  
\frac{\partial^{2} f}{\partial z \partial z}(z) \neq 0\biggr\} \\
= &\{z \in U \setminus \{0\} \mid G_{t}(z)=0\} 
\subset J^{-1}(0). 
\end{split}
\end{equation*}
Similarly, we define the following mixed polynomial: 
\begin{equation*}
\begin{split}
H_{t} &:= \det \begin{pmatrix}
         \frac{\partial G_{1}}{\partial x} & \frac{\partial G_{1}}{\partial y}  \\   
         \frac{\partial J}{\partial x}  & \frac{\partial J}{\partial y}
         \end{pmatrix}
         + i\det \begin{pmatrix}
         \frac{\partial G_{2}}{\partial x} & \frac{\partial G_{2}}{\partial y}  \\
         \frac{\partial J}{\partial x}  & \frac{\partial J}{\partial y} 
         \end{pmatrix}\\
  &= \Bigl(\frac{\partial G_{1}}{\partial x} +i\frac{\partial G_{2}}{\partial x}\Bigr)\frac{\partial J}{\partial y}
  - \Bigl(\frac{\partial G_{1}}{\partial y} +i\frac{\partial G_{2}}{\partial y}\Bigr)\frac{\partial J}{\partial x}. \\
\end{split}
\end{equation*}
The differentials of $G_{t}$ satisfy the following equations: 
\[
\frac{\partial G_{t}}{\partial z} = \frac{1}{2}\Bigl(\frac{\partial G_{1}}{\partial x} + 
\frac{\partial G_{2}}{\partial y}\Bigr) 
+ \frac{i}{2}\Bigl(\frac{\partial G_{2}}{\partial x} - 
\frac{\partial G_{1}}{\partial y}\Bigr),  \ \ \ 
\frac{\partial G_{t}}{\partial \bar{z}} = \frac{1}{2}\Bigl(\frac{\partial G_{1}}{\partial x} - 
\frac{\partial G_{2}}{\partial y}\Bigr) 
+ \frac{i}{2}\Bigl(\frac{\partial G_{2}}{\partial x} + 
\frac{\partial G_{1}}{\partial y}\Bigr).
\]
Then we have 
\begin{equation*}
\begin{split} 
  H_{t}&= \Bigl(\frac{\partial G_{t}}{\partial z} + \frac{\partial G_{t}}{\partial \bar{z}}\Bigr)\frac{\partial J}{\partial y}
  -i\Bigl(\frac{\partial G_{t}}{\partial z} - \frac{\partial G_{t}}{\partial \bar{z}}\Bigr)\frac{\partial J}{\partial x} \\
  &= \frac{\partial G_{t}}{\partial z}\Bigl(\frac{\partial J}{\partial y} -i\frac{\partial J}{\partial x}\Bigr) + 
  \frac{\partial G_{t}}{\partial \bar{z}}\Bigl(\frac{\partial J}{\partial y} + i\frac{\partial J}{\partial x}\Bigr). \\
\end{split}
\end{equation*}
Since $\frac{\partial J}{\partial y} -i\frac{\partial J}{\partial x} = -2i\frac{\partial J}{\partial \bar{z}}$ and 
$\frac{\partial J}{\partial y} + i\frac{\partial J}{\partial x} = 2i\frac{\partial J}{\partial z}$, 
$H_{t}$ is equal to 
\begin{equation*}
\begin{split} 
  H_{t}
  &= -2i\frac{\partial G_{t}}{\partial z}\frac{\partial J}{\partial \bar{z}} 
  + 2i\frac{\partial G_{t}}{\partial \bar{z}}\frac{\partial J}{\partial z} \\
  &= -2i\biggl\{-4i\frac{\partial f}{\partial z}\Bigl\lvert \frac{\partial^{2} f}{\partial z \partial z}\Bigr\rvert^{2} 
  + 2ti(a+ib)\frac{\partial^{3} f}{\partial z \partial z \partial z}\overline{\frac{\partial f}{\partial z}}\biggr\}
  \frac{\partial f}{\partial z}\overline{\frac{\partial^{2} f}{\partial z\partial z}} \\
  &+2i\biggl\{-2i\Bigl(\frac{\partial f}{\partial z}\Bigr)^{2}\overline{\frac{\partial^{3} f}{\partial z \partial z \partial z}} 
  + 2ti(a+ib)\Bigl\lvert \frac{\partial^{2} f}{\partial z \partial z}\Bigr\rvert^{2}\biggr\}\frac{\partial^{2} f}{\partial z \partial z}\overline{\frac{\partial f}{\partial z}} \\
  &= -4\Bigl(\frac{\partial f}{\partial z}\Bigr)^{2}\frac{\partial^{2} f}{\partial z \partial z}
  \biggl\{\overline{2\Bigl(\frac{\partial^{2} f}{\partial z\partial z}\Bigr)^{2} - \frac{\partial f}{\partial z}\frac{\partial^{3} f}{\partial z \partial z \partial z}}\biggr\} \\
  &+ 4t(a+ib)\overline{\frac{\partial f}{\partial z}\frac{\partial^{2} f}{\partial z \partial z}}
  \biggl\{-\Bigl(\frac{\partial^{2} f}{\partial z\partial z}\Bigr)^{2} + \frac{\partial f}{\partial z}\frac{\partial^{3} f}{\partial z \partial z \partial z}\biggr\}. 
\end{split}
\end{equation*}
Note that $J(0) = \lvert \frac{\partial f}{\partial z}(0)\rvert^{2} - t^{2}(a^{2} + b^{2}) \neq 0$ 
for $t \neq 0$ and $(a, b) \neq (0, 0)$. 
By the definitions of $G_t$ and $H_t$, we have 
\begin{equation*}
\begin{split}
&\{z \in U \setminus \{0\} \mid G_{t}(z) = H_{t}(z) = 0\} \\ 
= &\Bigl\{z \in U \mid J(z) = G_{1}(z) = G_{2}(z) = 
\frac{\partial (G_{1}, J)}{\partial (x, y)}(z) = \frac{\partial (G_{2}, J)}{\partial (x, y)}(z) =0 \Bigr\}. 
\end{split}
\end{equation*}
We show the existence of a linear deformation $f_t$ of $f$ which is an excellent map for generic $(a, b)$. 
\begin{lemma}
For a generic choice of $(a, b)$, 
$f_{t}|_{U}$ is an excellent map. 
\end{lemma}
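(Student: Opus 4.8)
The plan is to invoke Proposition 1: $f_t|_U$ is an excellent map if and only if the algebraic set $G'$ is empty, and by the identity established just above the statement of the lemma we already know $G' = \{z\in U\setminus\{0\}\mid G_t(z)=H_t(z)=0\}$ (the origin being in neither side since $J(0)\neq 0$). So it suffices to show this set is empty for a suitable $U$ and the relevant $(a,b)$ and $t$. Before starting I would fix $U$ so small that $\frac{\partial f}{\partial z}$ and $\frac{\partial^{2}f}{\partial z\partial z}$ have no zero in $U\setminus\{0\}$; this is possible because both are nonzero polynomials (as $\deg f\geq k\geq 2$), and it is compatible with the hypotheses already placed on $U$ in Lemma 1.

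Now let $z\in U\setminus\{0\}$ satisfy $G_t(z)=0$. Since $\frac{\partial f}{\partial z}(z)\,\frac{\partial^{2}f}{\partial z\partial z}(z)\neq 0$, the explicit formula for $G_t$ lets me solve
\[
t(a+ib)=\frac{\bigl(\frac{\partial f}{\partial z}(z)\bigr)^{2}\,\overline{\frac{\partial^{2}f}{\partial z\partial z}(z)}}{\frac{\partial^{2}f}{\partial z\partial z}(z)\,\overline{\frac{\partial f}{\partial z}(z)}},
\]
and substituting this into the explicit formula for $H_t$ and collecting terms I expect to land on
\[
H_t(z)=-\frac{4\bigl(\frac{\partial f}{\partial z}(z)\bigr)^{2}}{\frac{\partial^{2}f}{\partial z\partial z}(z)}\,\Psi(z),\qquad
\Psi:=3\Bigl|\tfrac{\partial^{2}f}{\partial z\partial z}\Bigr|^{4}-2\,\Re\!\Bigl(\bigl(\tfrac{\partial^{2}f}{\partial z\partial z}\bigr)^{2}\,\overline{\tfrac{\partial f}{\partial z}\,\tfrac{\partial^{3}f}{\partial z\partial z\partial z}}\Bigr).
\]
Carrying out this elimination of $G_t$ from $H_t$ is the computational core of the argument; note that $\Psi$ is a real polynomial on $U$ that does not depend on $a,b,t$. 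Since $\frac{\partial f}{\partial z}(z)\neq 0$, the equation $H_t(z)=0$ then forces $\Psi(z)=0$, so $G'\subseteq\{z\in U\setminus\{0\}\mid\Psi(z)=0\}$.

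It remains to show $\Psi$ has no zero in a punctured neighborhood of the origin. Writing $f(z)=c_0z^{k}+(\text{higher order})$ with $c_0\neq 0$, a direct expansion shows the lowest-order homogeneous part of $\Psi$ is
\[
k^{4}(k-1)^{3}\bigl(3(k-1)-2(k-2)\bigr)|c_0|^{4}|z|^{4k-8}=(k+1)\,k^{4}(k-1)^{3}\,|c_0|^{4}\,|z|^{4k-8},
\]
which is strictly positive. Hence there is $\varepsilon>0$ with $\Psi(z)>0$ for $0<|z|<\varepsilon$; after shrinking $U$ into $\{|z|<\varepsilon\}$ we get $\{z\in U\setminus\{0\}\mid\Psi(z)=0\}=\emptyset$, so $G'=\emptyset$ and, by Proposition 1, $f_t|_U$ is an excellent map. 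This in fact holds for every $(a,b)\neq(0,0)$ and every $0<|t|\ll 1$, so the genericity of $(a,b)$ comes for free. (Alternatively, keeping $U$ as in Lemma 1: $\{\Psi=0\}$ is a proper real-analytic subset of $U$ avoiding a neighborhood of $0$, and on it the quantity $\bigl|\frac{\partial f}{\partial z}\bigr|$ — the modulus of the right-hand side of the displayed equation for $t(a+ib)$ — is bounded below, so taking $|t|$ small enough again rules out solutions of $G_t(z)=0$ there.)

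The step I expect to be the main obstacle is purely computational: performing the substitution of $t(a+ib)$ into $H_t$ cleanly enough to reach the factored form $H_t(z)=-4\bigl(\tfrac{\partial f}{\partial z}\bigr)^{2}\Psi/\tfrac{\partial^{2}f}{\partial z\partial z}$, and then correctly reading off the lowest-order part of $\Psi$. The only arithmetic subtlety is the sign check $3(k-1)-2(k-2)=k+1>0$ and the degenerate boundary case $k=2$, where $\frac{\partial^{3}f}{\partial z\partial z\partial z}$ may vanish and one uses instead $\Psi(0)=48|c_0|^{4}>0$.
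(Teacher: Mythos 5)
Your proposal is correct, and up to the elimination step it coincides with the paper: both arguments reduce the lemma, via Proposition~1 and the identity stated just before the lemma, to showing that $\{z\in U\setminus\{0\}\mid G_{t}(z)=H_{t}(z)=0\}$ is empty, and both eliminate $t(a+ib)$ from $G_{t}=0$ to conclude that such a point would have to satisfy $\psi(z)=0$, where (writing $f_{z}=\frac{\partial f}{\partial z}$, $f_{zz}=\frac{\partial^{2}f}{\partial z\partial z}$, $f_{zzz}=\frac{\partial^{3}f}{\partial z\partial z\partial z}$) $\psi=3\lvert f_{zz}\rvert^{4}-2\Re\bigl(f_{zz}^{2}\,\overline{f_{z}f_{zzz}}\bigr)$ is exactly your $\Psi$; your factorization $H_{t}=-4f_{z}^{2}\psi/f_{zz}$ is equivalent to the paper's identity $f_{z}^{2}\overline{f_{z}}\,\psi=0$ and checks out. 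Where you genuinely diverge is in how $\psi=0$ is ruled out. The paper does not exclude branches of $\psi^{-1}(0)$ approaching the origin; it parametrizes them and invokes Lemma~1 to choose $(a,b)$ generically so that $\Phi=a\phi_{1}+b\phi_{2}\neq 0$ along each branch, $\Phi=0$ being a further consequence of $G_{1}=G_{2}=0$. You instead prove that $\psi>0$ on a punctured neighborhood of the origin by extracting its lowest-degree part $k^{4}(k-1)^{3}(k+1)\lvert c_{0}\rvert^{4}\lvert z\rvert^{4k-8}$, with the remaining terms of order $O(\lvert z\rvert^{4k-7})$; I verified this computation, including the sign $3(k-1)-2(k-2)=k+1$ and the case $k=2$ where $\psi(0)=48\lvert c_{0}\rvert^{4}$, so the step you flagged as the main obstacle does go through. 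This yields a different and in fact stronger local conclusion: after shrinking $U$ (which depends only on $f$), $f_{t}|_{U}$ is excellent for every $(a,b)\neq(0,0)$ and every $t\neq 0$, with no genericity condition and no appeal to Lemma~1 or the $\phi_{1},\phi_{2},\Phi$ apparatus. What the paper's route buys is that it avoids the leading-coefficient analysis and reuses machinery already set up in Section~3; what yours buys is a sharper statement and a shorter, self-contained proof of the lemma as stated.
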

\begin{proof}
We will show that there exists a deformation $f_t$ of $f$ 
such that $\{z \in U \setminus \{0\} \mid G_{t}(z) = H_{t}(z) = 0\}$ is empty. 
If $z$ satisfies $G_{t}(z) = H_{t}(z) = 0$, we have 
\begin{equation*}
\begin{split}
&\Bigl(\frac{\partial f}{\partial z}(z)\Bigr)^{2}\Bigl(\frac{\partial^{2} f}{\partial z \partial z}(z)\Bigr)^{2}\overline{\frac{\partial f}{\partial z}(z)}
  \biggl\{\overline{2\Bigl(\frac{\partial^{2} f}{\partial z\partial z}(z)\Bigr)^{2} - \frac{\partial f}{\partial z}(z)\frac{\partial^{3} f}{\partial z \partial z \partial z}(z)}\biggr\} \\
- &\Bigl(\frac{\partial f}{\partial z}(z)\Bigr)^{2}\overline{\frac{\partial f}{\partial z}(z)\Bigl(\frac{\partial^{2} f}{\partial z \partial z}(z)\Bigr)^{2}}
  \biggl\{-\Bigl(\frac{\partial^{2} f}{\partial z\partial z}(z)\Bigr)^{2} + \frac{\partial f}{\partial z}(z)\frac{\partial^{3} f}{\partial z \partial z \partial z}(z)\biggr\}  \\
=&\Bigl(\frac{\partial f}{\partial z}(z)\Bigr)^{2}\overline{\frac{\partial f}{\partial z}(z)}
\psi(z, \bar{z}) \\
=& 0, 
\end{split}
\end{equation*}
where $\psi(z, \bar{z}) = 
3\Bigl\lvert \frac{\partial^{2} f}{\partial z\partial z}\Bigr\rvert^{4} - 
\Bigl(\frac{\partial^{2} f}{\partial z \partial z}\Bigr)^{2}\overline{\frac{\partial f}{\partial z}\frac{\partial^{3} f}{\partial z \partial z \partial z}} -
\Bigl(\overline{\frac{\partial^{2} f}{\partial z \partial z}}\Bigr)^{2}\frac{\partial f}{\partial z}\frac{\partial^{3} f}{\partial z \partial z \partial z}$. 
Note that $\psi$ is a real-valued polynomial function. 
Since $\psi^{-1}(0)$ is a $1$-dimensional algebraic set, $\psi^{-1}(0)$ has finitely many branches which 
depend only on $f(z)$. On $U$, each branch of $\psi^{-1}(0)$ is given by a convergent power series 
\[
\xi_{m}(u) = \Bigl(\textstyle\sum_{\ell}c_{1,\ell}u^{\ell}, \textstyle\sum_{\ell}c_{2,\ell}u^{\ell}\Bigr), 
\]
where $0\leq u \ll 1$ for $m=1,\dots,d$. 
By Lemma $1$, the set $\{u \neq 0 \mid \phi_{1}(\xi_{m}(u)) = \phi_{2}(\xi_{m}(u)) =~0\}$ 
is empty for $m=1,\dots,d$. 
Since $\psi^{-1}(0) \cap U$ has finitely many branches, 
we can choose coefficients $a$ and $b$ of $\Phi$ such that 
\begin{equation*}
\Phi(\xi_{m}(u)) = a\phi_{1}(\xi_{m}(u)) + b\phi_{2}(\xi_{m}(u)) \neq 0 
\end{equation*}
for $0 < u \ll 1$ and $m=1,\dots,d$. 
Thus the intersection of $\{z\in U\setminus \{0\} \mid G_{1}(z)=G_{2}(z)=0\}$ and 
$\{z\in U\setminus \{0\} \mid \frac{\partial (G_{1}, J)}{\partial (x, y)}(z) = 
\frac{\partial (G_{2}, J)}{\partial (x, y)}(z) =0 \}$ 
is empty. 
By Proposition~$1$, 
the set of singularities of $f_{t}$ consists of either fold singularities or cusps. 
Therefore, 
$f_{t}|_{U}$ is an excellent map 
when $(a, b)$ satisfies $\Phi(\xi_{m}(u)) \neq 0$ for $0 < u \ll 1$ and $m=1,\dots,d$. 
\end{proof}
Let $w$ be a singularity of $f$ and $U_{w}$ be a sufficiently small neighborhood of $w$. 
By changing coordinates of $U_{w}$ and $f(U_{w})$, 
we may assume that $w = 0$ and $f(w)=0$. 
So we can apply Lemma $2$ to any singularity of $f$. 
Thus we can check that $f_t$ is an excellent map for $0 < \lvert t\rvert \ll 1$ if 
$a$ and $b$ are generic.

\section{Proof of Theorem $1$}
To calculate the number of cusps of $f_t$, we study zero points of $G_{t}$. 
\begin{lemma}
The set $\{z \in U \mid G_{t}(z)=0, z \neq 0\}$ is the set of 
positive simple roots of $G_{t}$ for  
$(a, b) \neq (0, 0)$ and $0 < \lvert t\rvert \ll 1$. 
\end{lemma}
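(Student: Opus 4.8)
The goal is to show that every nonzero zero $z$ of $G_t$ in $U$ is a \emph{positive} simple root, i.e.\ $\lvert \partial G_t/\partial z(z)\rvert > \lvert \partial G_t/\partial\bar z(z)\rvert$. The starting point is the explicit formula derived in Section~$3$,
\[
G_{t} = -2i\Bigl(\tfrac{\partial f}{\partial z}\Bigr)^{2}\overline{\tfrac{\partial^{2} f}{\partial z \partial z}} + 2ti(a+ib)\tfrac{\partial^{2} f}{\partial z \partial z}\overline{\tfrac{\partial f}{\partial z}},
\]
together with the fact established there that $\{z\in U\setminus\{0\}\mid G_t(z)=0\}\subset J^{-1}(0)$, so that at any such root $\lvert\partial f/\partial z(z)\rvert^2 = t^2(a^2+b^2)$; in particular both $\partial f/\partial z(z)\neq 0$ and $\partial^2 f/\partial z\partial z(z)\neq 0$ there (the latter because $G_t(z)=0$ with the first factor nonzero forces the second). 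First I would differentiate the closed form of $G_t$ with respect to $z$ and $\bar z$, using that $f$ is holomorphic so $\partial(\partial^k f/\partial z^k)/\partial\bar z = 0$ while $\partial/\partial z$ raises the order of the $z$-derivative; this expresses $\partial G_t/\partial z$ and $\partial G_t/\partial\bar z$ as polynomials in $\partial f/\partial z$, $\partial^2 f/\partial z\partial z$, $\partial^3 f/\partial z^3$ and their conjugates, with coefficients linear in $t(a+ib)$.

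Next I would evaluate these two derivatives at a root $z\neq 0$ and exploit the constraint $G_t(z)=0$, which says $t(a+ib)\tfrac{\partial^2 f}{\partial z\partial z}\overline{\tfrac{\partial f}{\partial z}} = \bigl(\tfrac{\partial f}{\partial z}\bigr)^2\overline{\tfrac{\partial^2 f}{\partial z\partial z}}$, i.e.\ $t(a+ib) = \bigl(\tfrac{\partial f}{\partial z}\bigr)^2\overline{\tfrac{\partial^2 f}{\partial z\partial z}}\big/\bigl(\tfrac{\partial^2 f}{\partial z\partial z}\overline{\tfrac{\partial f}{\partial z}}\bigr)$, a relation I can substitute to eliminate the parameter. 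Feeding this back in, the comparison $\lvert\partial G_t/\partial z\rvert^2 - \lvert\partial G_t/\partial\bar z\rvert^2$ becomes a polynomial expression purely in the holomorphic jet of $f$ at $z$. The key leading-order behaviour is governed by the multiplicity: if the root $z$ (for small $t$) lies near the origin where $f$ has multiplicity $k\geq 2$, then $\partial f/\partial z \sim c z^{k-1}$, $\partial^2 f/\partial z^2 \sim c(k-1)z^{k-2}$, $\partial^3 f/\partial z^3\sim c(k-1)(k-2)z^{k-3}$, and one checks the dominant term of $\lvert\partial G_t/\partial z\rvert^2 - \lvert\partial G_t/\partial\bar z\rvert^2$ is a positive multiple of $\lvert z\rvert$ to the appropriate power, hence strictly positive for $0<\lvert z\rvert\ll 1$. (Alternatively, and perhaps more cleanly, one can invoke the proof of Lemma~$2$: there the condition that makes $f_t|_U$ excellent is precisely that $G_t$ and $H_t$ have no common zero in $U\setminus\{0\}$, equivalently $\psi(z,\bar z)\neq 0$ at every nonzero root of $G_t$; since $G_t$ has no root of multiplicity $>1$ — it has no common zero with the "derivative-like" object $H_t$ — every nonzero root is simple, so $m_s(G_t,z)=\pm 1$, and it remains only to pin down the sign.)

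To fix the sign I would argue by a continuity/degree argument in $t$: for $t\neq 0$ small, the nonzero roots of $G_t$ in $U$ all bifurcate out of the origin, and by Oka's conservation of multiplicity with sign (\cite[Proposition 16]{O}) the sum of their signs equals $m_s(G_t,0)$ at $t=0$... more robustly, one computes directly that on $J^{-1}(0)$ the sign of $\lvert\partial G_t/\partial z\rvert-\lvert\partial G_t/\partial\bar z\rvert$ is determined by the sign of an explicit polynomial in the jet of $f$, and the leading term is manifestly positive. Concretely I expect the inequality to reduce, after substituting the $G_t(z)=0$ relation, to something like $\bigl\lvert 3\bigl(\tfrac{\partial^2 f}{\partial z^2}\bigr)^2 - 2\tfrac{\partial f}{\partial z}\tfrac{\partial^3 f}{\partial z^3}\bigr\rvert$ being compared with a strictly smaller quantity, with the gap controlled by $\lvert\partial f/\partial z\rvert^2$ times a positive constant; shrinking $U$ makes the comparison definite.

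\textbf{Main obstacle.} The hard part is the bookkeeping in the sign computation: after differentiating $G_t$ and substituting the root relation to eliminate $t(a+ib)$, one is left with a moderately large polynomial identity in $\partial f/\partial z, \partial^2 f/\partial z^2, \partial^3 f/\partial z^3$ and conjugates, and one must correctly identify its lowest-order term in $z$ near the origin and verify that its coefficient is positive — this requires carefully tracking the combinatorial factors $(k-1)$, $(k-2)$ coming from the multiplicity and ruling out the degenerate case $k=2$ (where $\partial^3 f/\partial z^3$ may not vanish at $0$ but the leading analysis still goes through since then $\partial^2 f/\partial z^2(0)\neq 0$). Keeping the $U$-shrinking uniform over the finitely many branches of the relevant zero sets, exactly as in Lemma~$1$ and Lemma~$2$, is the remaining technical point.
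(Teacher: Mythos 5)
Your plan follows essentially the same route as the paper's proof: write $G_{t} = -2i\bigl(\frac{\partial f}{\partial z}\bigr)^{2}\overline{\frac{\partial^{2} f}{\partial z\partial z}} + 2ti(a+ib)\frac{\partial^{2} f}{\partial z\partial z}\overline{\frac{\partial f}{\partial z}}$, differentiate in $z$ and $\bar z$, use the root relation $t(a+ib)\frac{\partial^{2} f}{\partial z\partial z}\overline{\frac{\partial f}{\partial z}} = \bigl(\frac{\partial f}{\partial z}\bigr)^{2}\overline{\frac{\partial^{2} f}{\partial z\partial z}}$ to eliminate the parameter, and then decide the sign of $\bigl\lvert\frac{\partial G_{t}}{\partial z}\bigr\rvert - \bigl\lvert\frac{\partial G_{t}}{\partial \bar z}\bigr\rvert$ by the leading term of the expansion $f = cz^{k}+\cdots$ with $k\geq 2$ on a sufficiently small $U$, which is exactly the paper's comparison (there the two sides have leading coefficients proportional to $k^{6}(k-1)^{2}$ versus $k^{5}(k-1)^{2}$, so the positive direction wins). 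Your parenthetical alternatives (excellency of $f_t$, a degree-in-$t$ argument) are unnecessary and weaker than this direct computation, but the main line is correct and coincides with the paper's argument.
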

\begin{proof}
If $z$ is a singularity of $G_{t}$, $z$ satisfies 
\[
\biggl\lvert -4i\frac{\partial f}{\partial z}\Bigl\lvert\frac{\partial^{2} f}{\partial z \partial z}\Bigr\rvert^{2} 
 + 2ti(a+ib)\frac{\partial^{3} f}{\partial z\partial z \partial z}\overline{\frac{\partial f}{\partial z}}\biggr\rvert = 
\biggl\lvert-2i\Bigl(\frac{\partial f}{\partial z}\Bigr)^{2}\overline{\frac{\partial^{3} f}{\partial z\partial z \partial z}} 
 + 2ti(a+ib)\Bigl\lvert \frac{\partial^{2} f}{\partial z \partial z}\Bigr\rvert^{2}\biggr\rvert,
\]
see \cite[Proposition 15]{O}. 
Assume that $z$ belongs to $G_{t}^{-1}(0)$. By the definition of $G_{t}$ and the above equation, we have 
\begin{equation}
\begin{split}
&\biggl\lvert 2\frac{\partial^{2} f}{\partial z \partial z}
\Bigl\lvert\frac{\partial f}{\partial z}\Bigr\rvert^{2}
\Bigl\lvert\frac{\partial^{2} f}{\partial z \partial z}\Bigr\rvert^{2} - 
\frac{\partial f}{\partial z}\frac{\partial^{3} f}{\partial z\partial z \partial z}
\Bigl\lvert\frac{\partial f}{\partial z}\Bigr\rvert^{2}
\overline{\frac{\partial^{2} f}{\partial z\partial z}}\biggr\rvert \\
= 
&\biggl\lvert \frac{\partial f}{\partial z}\frac{\partial^{2} f}{\partial z \partial z}
\Bigl\lvert\frac{\partial f}{\partial z}\Bigr\rvert^{2}\overline{\frac{\partial^{3} f}{\partial z\partial z \partial z}} - 
\Bigl(\frac{\partial f}{\partial z}\Bigr)^{2}\Bigl\lvert\frac{\partial^{2} f}{\partial z \partial z}\Bigr\rvert^{2}
\overline{\frac{\partial^{2} f}{\partial z\partial z}}\biggr\rvert. 
\end{split}
\end{equation}
Let $k$ be the multiplicity of $f$ at the origin. 
Then $f(z)$ has the following form: 
\[
f(z) = cz^{k} + (\text{higher terms}). 
\]
By equation $(3)$, we have 
\[
\lvert z^{5k-8}\rvert\lvert k^{6}(k-1)^{2}c\lvert c\rvert^{4} + (\text{higher terms})\rvert 
= \lvert z^{5k-8}\rvert\lvert -k^{5}(k-1)^{2}c\lvert c\rvert^{4} + (\text{higher terms})\rvert. 
\]
Since $U$ is sufficiently small and $k$ is greater than $1$, 
the above equation does not hold in $U\setminus \{0\}$. 
So we can show that 
\[
\Bigl\lvert \frac{\partial G_{t}}{\partial z}(z)\Bigr\rvert > 
\Bigl\lvert \frac{\partial G_{t}}{\partial \bar{z}}(z)\Bigr\rvert 
\]
for any $z \in (U\setminus\{0\}) \cap G_{t}^{-1}(0)$. 
Thus zero points of $G_{t}$ except for the origin are positive simple. 
\end{proof}

Assume that $f_t$ is an excellent map for $0 < \lvert t\rvert \ll 1$. 
We prove Theorem $1$.

\begin{proof}[Proof of Theorem 1]
By Proposition~$1$, 
the number of cusps of $f_{t}|_{U}$ is equal to 
the number of $\{z \in U \mid G_{t}(z) = 0, z\neq 0\}$. 
Set $\{z \in U \mid G_{t}(z) = 0, z\neq 0\} = \{w_{1}, \dots, w_{\nu} \}$. 
We denote the multiplicity of sign by $m_{s}(G_{t}, w_{j})$ for $j=1,\dots, \nu $. 
By \cite[Proposition 16]{O} and Lemma $3$, we have 
\[
\Bigl(\textstyle\sum_{j=1}^{\nu}m_{s}(G_{t}, w_{j})\Bigr) + m_{s}(G_{t}, 0) = 
\nu + m_{s}(G_{t}, 0) = m_{s}(G_{0}, 0). 
\]
The multiplicity $m_{s}(G_{0}, 0)$ is equal to  
\begin{equation*}
\begin{split}
&\deg 
\biggl(-2i\Bigl(\frac{\partial f}{\partial z}\Bigr)^{2}\overline{\frac{\partial^{2} f}{\partial z \partial z}}\biggr) 
\biggr/ \biggl\lvert-2i\Bigl(\frac{\partial f}{\partial z}\Bigr)^{2}\overline{\frac{\partial^{2} f}{\partial z \partial z}}\biggr\rvert 
: S^{1}_{\varepsilon}(0) \rightarrow S^1 \biggr) \\
&= 2(k-1)-(k-2) = k, 
\end{split}
\end{equation*}
where $S^{1}_{\varepsilon}(0) = \{z \in U \mid \lvert z\rvert = \varepsilon \}$ and 
$0 < \varepsilon \ll 1$. 
By the definition of $G_{t}$, for any $t \neq 0$, $m_{s}(G_{t}, 0)$ is equal to 
\begin{equation*}
\begin{split}
&\deg \biggl( 
2ti(a+ib)\frac{\partial^{2} f}{\partial z \partial z}\overline{\frac{\partial f}{\partial z}}
\biggr/ \biggl\lvert 
2ti(a+ib)\frac{\partial^{2} f}{\partial z \partial z}\overline{\frac{\partial f}{\partial z}}\biggr\rvert
: S^{1}_{\varepsilon_{t}}(0) \rightarrow S^{1} \biggr) \\
&= k-2-(k-1) = -1,
\end{split}
\end{equation*}
where $0 < \varepsilon_{t} \ll \varepsilon$. 
Thus the number of cusps of $f_{t}|_{U}$ is equal to $k+1$. 
\end{proof}

\begin{proof}[Proof of Corollary 1]
Set $\frac{\partial f}{\partial z} = n\prod_{j=1}^{\ell}(z-w_{j})^{m_{j}}$. 
Let $U_{j}$ be a sufficiently small neighborhood of $w_j$. 
By the same argument as in the proof of Theorem $1$,
the number of cusps of $f_{t}|_{U_{j}}$ is equal to $m_{j} + 2$. 
Note that $\textstyle \sum_{j=1}^{\ell}m_{j} = n-1$. 
Then the number of cusps of $f_t$ is equal to 
\begin{equation*}
\begin{split}
\textstyle \sum_{j=1}^{\ell}(m_{j} + 2) &= (\textstyle \sum_{j=1}^{\ell}m_{j}) + 2\ell \\
                             &= n-1 + 2\ell.
\end{split}
\end{equation*}
Since the number $\ell$ of singularities of $f$ belongs to $[1, n-1]$, 
the number of cusps of $f_t$ belongs to $[n+1, 3n-3]$. 
\end{proof}

Note that $n \geq 2$. 
We give an application of Theorem $1$. 
\begin{corollary}
Let $f_t$ be a linear deformation of a complex polynomial $f$. 
Assume that $f_t$ is an excellent map for $0 < \lvert t\rvert \ll 1$. 
Then the number of cusps of $f_t$ is at least three. 
\end{corollary}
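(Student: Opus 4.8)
The plan is to deduce this statement directly from Theorem~1 (equivalently, from Corollary~1). Recall the standing hypothesis of the paper that $f(0)=0$ and that the origin of $\Bbb{C}$ is a singularity of $f$. Since $f$ is holomorphic, its Jacobian as a real map equals $\lvert \frac{\partial f}{\partial z}\rvert^{2}$, so the origin being a singularity means $\frac{\partial f}{\partial z}(0)=0$; hence the multiplicity $k$ of $f$ at the origin satisfies $k\geq 2$, and in particular $n=\deg f\geq 2$. Thus the hypotheses of Theorem~1 are met.

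Next I would localize the count. Let $U$ be a sufficiently small neighborhood of the origin as in Theorem~1. Since $f_{t}$ is assumed to be an excellent map for $0<\lvert t\rvert\ll 1$, the local normal forms $(1)$--$(3)$ of Section~2 that characterize regular points, folds, and cusps are preserved under restriction to the open subset $U$; therefore every cusp of $f_{t}|_{U}$ is a cusp of $f_{t}:\Bbb{R}^{2}\rightarrow\Bbb{R}^{2}$. Consequently the number of cusps of $f_{t}$ is at least the number of cusps of $f_{t}|_{U}$.

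Finally, apply Theorem~1: the number of cusps of $f_{t}|_{U}$ equals $k+1$. Combining this with $k\geq 2$ shows that $f_{t}$ has at least $k+1\geq 3$ cusps, as claimed. As an alternative route, one may invoke Corollary~1: writing $\frac{\partial f}{\partial z}=n\prod_{j=1}^{\ell}(z-w_{j})^{m_{j}}$ with $\ell\geq 1$, the number of cusps of $f_{t}$ equals $n-1+2\ell\geq 2-1+2=3$.

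There is essentially no obstacle here; the statement is a formal consequence of the previously established results. The only points warranting an explicit remark are that the singularity hypothesis forces $k\geq 2$ and that cusps are a local phenomenon, so that the count over $U$ in Theorem~1 gives a genuine lower bound for the global count; both are immediate from the definitions recalled in Section~2.
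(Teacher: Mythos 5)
Your proposal is correct and follows essentially the same route as the paper: reduce to a singularity at the origin (where the multiplicity is necessarily $k\geq 2$), apply Theorem~1 to get $k+1\geq 3$ cusps in a small neighborhood $U$, and note that cusps of $f_{t}|_{U}$ are cusps of $f_{t}$, so the global count is at least three.
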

\begin{proof}
By the change of coordinates, we may assume that the origin $0$ is a singularity of $f$ 
and $f(0) = 0$. 
Then the multiplicity of $f$ at $0$ is greater than $1$. 
By Theorem $1$, the number of cusps of $f_{t}|_{U}$ is at least three, where $U$ is a sufficiently small 
neighborhood of $0$. 
\end{proof}


\subsection{Deformations of $f_t$}
Let $f_t$ be a linear deformation of $f$ which is an excellent map. 
We fix $a, b$ and $t$. 
Let $g(z, \bar{z})$ be a mixed polynomial which satisfies 
$\frac{\partial g}{\partial z}(0) = \frac{\partial g}{\partial \bar{z}}(0) = 0$. 
In this subsection, we study a deformation of $f_t$: 
\[
f_{t,s}(z) := f(z) + t(a+ib)\bar{z} + sg(z, \bar{z}), 
\]
where $0 < \lvert s\rvert \ll \lvert t\rvert \ll 1$. 
\begin{theorem}
The set of singularities of $f_{t, s}$ consists of either fold singularities or cusps 
and 
the number of cusps of $f_{t, s}$ is constant for $0 \leq \lvert s\rvert \ll \lvert t\rvert \ll 1$. 
\end{theorem}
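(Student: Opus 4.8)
The plan is to imitate the proof strategy already used for Theorem~1 and Lemma~2, but now applied uniformly to the whole family $f_{t,s}$ and its associated auxiliary mixed polynomials. First I would set up, exactly as in Section~3, the Jacobian $J_{t,s}$ of $f_{t,s}$ and the mixed polynomials $G_{t,s}$ and $H_{t,s}$ built from $J_{t,s}$; since $\frac{\partial g}{\partial z}(0)=\frac{\partial g}{\partial\bar z}(0)=0$, adding $sg$ does not change the leading terms of $\frac{\partial f}{\partial z}$, $\frac{\partial^2 f}{\partial z\partial z}$, $\frac{\partial^3 f}{\partial z\partial z\partial z}$ at the origin, so the lowest-order parts of $G_{t,s}$ and $H_{t,s}$ near $0$ agree with those of $G_t$ and $H_t$ up to a perturbation of order $s$. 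The first claim, that $f_{t,s}|_U$ is still an excellent map, should then follow by the same computation as in Lemma~2: one checks that $\{z\in U\setminus\{0\}\mid G_{t,s}(z)=H_{t,s}(z)=0\}$ is empty. The point is that for $s=0$ this set is already empty by Lemma~2, the set is cut out by real-analytic equations depending continuously on $s$, and away from the origin the defining functions are bounded away from zero on a compact core of $U$; near the origin one uses the explicit leading-term estimate (the analogue of the $|z^{5k-8}|$ computation in Lemma~3) which is stable under an $O(s)$ perturbation with $|s|\ll|t|$. Hence for $|s|$ small enough the emptiness persists, and Proposition~1 gives that the singularities of $f_{t,s}$ are folds and cusps.

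Next I would count the cusps. By Proposition~1 the number of cusps of $f_{t,s}|_U$ equals the number of points in $\{z\in U\mid G_{t,s}(z)=0,\ z\neq 0\}$, and globally one sums over small neighborhoods of the finitely many singularities of $f$ as in the proof of Corollary~1. The key tool is the conservation of multiplicity with sign, \cite[Proposition~16]{O}: the roots of $G_{t,s}$ in $U$ bifurcating from a given root of $G_t$ have signed multiplicities summing to $m_s(G_t,\cdot)$, and $m_s(G_{t,0},0)$ (and the sign at the origin for $t\neq0$, equal to $-1$) is unchanged because the relevant leading coefficient is unchanged by the $sg$ term. By the analogue of Lemma~3 all nonzero roots of $G_{t,s}$ in $U$ are positive simple, i.e. have signed multiplicity $+1$; therefore their number is forced to equal $m_s(G_{t,0},0)-m_s(G_{t,s},0)$, which by the Theorem~1 computation is $k+1$ locally and $n-1+2\ell$ globally — in particular independent of $s$ for $0\le|s|\ll|t|\ll1$. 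Thus the cusp count is constant along the family.

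I expect the main obstacle to be establishing the analogue of Lemma~3 for $G_{t,s}$, i.e. that every nonzero zero of $G_{t,s}$ in $U$ is positive simple (satisfies $|\partial G_{t,s}/\partial z|>|\partial G_{t,s}/\partial\bar z|$). In Lemma~3 this rests on a delicate comparison of the lowest-order terms of two mixed polynomials in $z,\bar z$, valid because $k>1$ and $U$ is tiny; one must verify that the perturbation $sg$ — whose derivatives need not be holomorphic and whose lowest-order contribution to $G_{t,s}$ a priori could be of the same or lower order than the leading term of $G_t$ at $0$ — does not destroy this inequality. The resolution is to use $|s|\ll|t|$: the dominant term of $G_{t,s}$ near $0$ is the $2ti(a+ib)\frac{\partial^2 f}{\partial z\partial z}\overline{\partial f/\partial z}$ term of order $|z|^{2k-3}$ (times $t$), while the $sg$-contribution carries a factor $s$ and, because $\frac{\partial g}{\partial z}(0)=\frac{\partial g}{\partial\bar z}(0)=0$, at worst the same power of $|z|$; choosing $|s|$ small relative to $|t|$ keeps the strict inequality $|\partial G_{t,s}/\partial z|>|\partial G_{t,s}/\partial\bar z|$ on $U\setminus\{0\}$, and simultaneously keeps $J_{t,s}(0)\neq 0$. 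Once this uniform estimate is in place, the signed-multiplicity bookkeeping and the emptiness of $G_{t,s}^{-1}(0)\cap H_{t,s}^{-1}(0)\setminus\{0\}$ both go through as above, and the theorem follows.
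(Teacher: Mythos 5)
Your proposal is correct and follows essentially the same route as the paper: introduce $J_{t,s}$, $G_{t,s}$, $H_{t,s}$, observe that emptiness of their common zero set (hence, via Proposition~1, the fold/cusp property) persists for $0\le\lvert s\rvert\ll\lvert t\rvert$, check that the nonzero roots of $G_{t,s}$ remain positive simple, and conclude constancy of the cusp count from Oka's conservation of signed multiplicity. The only difference is one of detail: where the paper simply asserts the existence of $s_{0}$ and the persistence of positive simplicity, you spell out the compactness and leading-term estimates (using $\lvert s\rvert\ll\lvert t\rvert$ and $\frac{\partial g}{\partial z}(0)=\frac{\partial g}{\partial\bar z}(0)=0$) that justify them.
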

\begin{proof}
Let $w$ be a singularity of $f$ and $U_{w}$ be a sufficiently small neighborhood of $w$. 
Set 
$J_{s} = \frac{\partial (\Re f_{t,s}, \Im f_{t,s})}{\partial (x, y)}, 
G_{1 ,s} = \frac{\partial (\Re f_{t,s}, J_{s})}{\partial (x, y)}$ 
and $G_{2 ,s} = \frac{\partial (\Im f_{t,s}, J_{s})}{\partial (x, y)}$. 
We define mixed polynomials $G_{t,s}$ and $H_{t,s}$ as follows: 
\[
G_{t,s} := G_{1 ,s} + iG_{2 ,s}, \ \ \ 
H_{t,s} := 
\frac{\partial (G_{1,s}, J_{s})}{\partial (x, y)} + i\frac{\partial (G_{2,s}, J_{s})}{\partial (x, y)}. 
\]
Since the set of singularities of $f_{t,0}$ consists of either fold singularities or cusps, we have 
\[
\{z \in U_{w} \mid J_{0}(z) = G_{t,0} = H_{t,0} = 0\} = \emptyset. 
\]
Then there exists a positive real number $s_{0}$ such that 
\[
\{z \in U_{w} \mid J_{s}(z) = G_{t,s} = H_{t,s} = 0\} = \emptyset, 
\]
for any $0 \leq \lvert s\rvert \leq s_{0}$. Thus any singularity of $f_{t, s}$ is a fold singularity or a cusp. 
By the definition of $J_{t,s}$, the origin $0$ is a regular point of $f_{t,s}$. 
Since the set $\{z \in \Bbb{C} \mid G_{t,0}(z) = 0, z \neq 0\}$ is the set of positive simple roots of $f_{t,0}$, 
$\{z \in \Bbb{C} \mid G_{t,s}(z) = 0, z \neq 0\}$ is also the set of positive simple roots of $f_{t,s}$ for 
$0 \leq \lvert s\rvert \leq s_{0}$. 
By \cite[Proposition 16]{O}, the number of cusps of $f_{t, s}$ is constant for $0 \leq \lvert s\rvert \leq s_{0}$. 
\end{proof}

\subsection{Lower bounds of the numbers of cusps of non-linear deformations}
Let $h(z, \bar{z})$ be a mixed polynomial 
which satisfies $h(0) = 0$ and 
$\lvert \frac{\partial h}{\partial z}(0)\rvert \neq \lvert \frac{\partial h}{\partial \bar{z}}(0)\rvert$. 
We define a deformation $f_{t,h}$ of a complex polynomial $f$ as follows: 
\[
f_{t,h}(z) := f(z) + th(z, \bar{z}), 
\]
where $0 < \lvert t\rvert \ll 1$. Set $h_{1} = \Re h, h_{2} = \Im h$ and 
\[
J_{t, h} = \det \begin{pmatrix}
         \frac{\partial f_{1}}{\partial x} + t\frac{\partial h_{1}}{\partial x} & 
         \frac{\partial f_{1}}{\partial y} + t\frac{\partial h_{1}}{\partial y} \\
         -\frac{\partial f_{1}}{\partial y} + t\frac{\partial h_{2}}{\partial x} & 
         \frac{\partial f_{1}}{\partial x} + t\frac{\partial h_{2}}{\partial y}
         \end{pmatrix}. 
\]
Then any singularity of $f_{t,h}$ belongs to $J_{t, h}^{-1}(0)$. 
Assume that $f_{t,h}$ satisfies the following conditions: 
\renewcommand{\theenumi}{\roman{enumi}}
\begin{enumerate}
\item
$f_{t,h}$ 
is an excellent map for $0 < \lvert t\rvert \ll 1$,  \\
\item 
any cusp of $f_{t,h}$ is a simple root of $G_{t,h}$, where 
\begin{equation*}
\begin{split}
G_{t, h} &:=  \det \begin{pmatrix}
         \frac{\partial f_{1}}{\partial x} + t\frac{\partial h_{1}}{\partial x} & 
         \frac{\partial f_{1}}{\partial y} + t\frac{\partial h_{1}}{\partial y} \\   
         \frac{\partial J_{t, h}}{\partial x}  & \frac{\partial J_{t, h}}{\partial y}
         \end{pmatrix}
         + i\det \begin{pmatrix}
         -\frac{\partial f_{1}}{\partial y} + t\frac{\partial h_{2}}{\partial x} & 
         \frac{\partial f_{1}}{\partial x} + t\frac{\partial h_{2}}{\partial y} \\
         \frac{\partial J_{t, h}}{\partial x}  & \frac{\partial J_{t, h}}{\partial y} 
         \end{pmatrix}\\
    &= -2i\Bigl(\frac{\partial f}{\partial z} + t\frac{\partial h}{\partial z}\Bigr)\overline{\frac{\partial J_{t, h}}{\partial z}} 
    + 
  2ti\frac{\partial h}{\partial \bar{z}}\frac{\partial J_{t, h}}{\partial z}. 
\end{split}
\end{equation*}
\end{enumerate}
Since $f_{t,h}$ is an excellent map, 
the intersection of $J_{t, h}^{-1}(0)$ and $(\frac{\partial J_{t, h}}{\partial z})^{-1}(0)$ is empty 
by Proposition~$1$. 
Let $U$ be a sufficiently small neighborhood of the origin. 
Then the number of cusps of $f_{t,h}|_{U}$ is equal to 
the number of $\{z \in U \mid G_{t,h}(z) = 0, \frac{\partial J_{t, h}}{\partial z}(z) \neq 0 \}$. 
We define 
\begin{equation*}
\delta = \begin{cases}
         1 & \lvert \frac{\partial h}{\partial z}(0)\rvert > \lvert \frac{\partial h}{\partial \bar{z}}(0)\rvert \\
         -1 & \lvert \frac{\partial h}{\partial z}(0)\rvert < \lvert \frac{\partial h}{\partial \bar{z}}(0)\rvert
         \end{cases}.
\end{equation*}

\begin{theorem}
Let $f_{t,h}$ a deformation of a complex polynomial $f$ which satisfies the condition (i) and the condition (ii). 
Then the number of cusps of $f_{t,h}|_{U}$ is greater than or equal to $k - \delta$, where 
$k$ is the multiplicity of $f$ at the origin. 
\end{theorem}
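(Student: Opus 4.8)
The plan is to adapt the proof of Theorem~$1$, the new feature being that for a general $h$ the mixed polynomial $G_{t,h}$ has zeros in $U$ that are not cusps, and these must be tracked. Since $f_{t,h}$ is an excellent map, Proposition~$1$ identifies the number of cusps of $f_{t,h}|_{U}$ with $\#C$, where $C=\{z\in U\mid G_{t,h}(z)=0,\ \frac{\partial J_{t,h}}{\partial z}(z)\neq 0\}$; by condition~(ii) each such $z$ is a simple root, so $m_{s}(G_{t,h},z)=\pm 1$ and hence $\#C\geq\bigl|\sum_{z\in C}m_{s}(G_{t,h},z)\bigr|$. I would therefore aim to prove $\sum_{z\in C}m_{s}(G_{t,h},z)\geq k-\delta$. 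Writing $p=\frac{\partial f}{\partial z}+t\frac{\partial h}{\partial z}$, $q=t\frac{\partial h}{\partial\bar z}$ and $r=\frac{\partial J_{t,h}}{\partial z}$, so that $J_{t,h}=|p|^{2}-|q|^{2}$ and $G_{t,h}=-2i(p\bar r-qr)$, one sees that $r(z)=0$ forces $G_{t,h}(z)=0$, and that $G_{t,h}(z)=0$ with $r(z)\neq 0$ forces $|p(z)|=|q(z)|$, i.e.\ $J_{t,h}(z)=0$, so such a $z$ is a critical point and, since $J_{t,h}(0)=t^{2}(|\frac{\partial h}{\partial z}(0)|^{2}-|\frac{\partial h}{\partial\bar z}(0)|^{2})\neq 0$ (hence $z\neq 0$), a cusp. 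Thus the zeros of $G_{t,h}$ in $U$ split as $C\sqcup N$ with $N=r^{-1}(0)\cap U$.

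The next step is a degree count. Letting $t\to 0$ one has $G_{t,h}\to-2i\bigl(\frac{\partial f}{\partial z}\bigr)^{2}\overline{\frac{\partial^{2}f}{\partial z\partial z}}$ and $r\to\frac{\partial^{2}f}{\partial z\partial z}\overline{\frac{\partial f}{\partial z}}$, both nonvanishing on $\partial U$, so $\deg(G_{t,h},\partial U)=2(k-1)-(k-2)=k$ and $\deg(r,\partial U)=(k-2)-(k-1)=-1$; by the argument principle $\sum_{z\in C}m_{s}(G_{t,h},z)+\sum_{z\in N}m_{s}(G_{t,h},z)=k$ and $\sum_{z\in N}m_{s}(r,z)=-1$. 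Set $\Omega_{-}=\{z\in U\mid J_{t,h}(z)<0\}$ and $\Omega_{+}=U\setminus\overline{\Omega_{-}}$. On $\overline{\Omega_{-}}$ one has $|q|\geq|p|$ and $q\neq 0$ (otherwise $df_{t,h}$ would have rank $0$ somewhere, impossible for an excellent map), so at a zero $z$ of $r$ the term $-2iqr$ (if $z\in\Omega_{-}$), resp.\ $-2ip\bar r$ (if $z\in\Omega_{+}$), dominates $G_{t,h}$ on a small circle about $z$, giving $m_{s}(G_{t,h},z)=m_{s}(r,z)$, resp.\ $m_{s}(G_{t,h},z)=-m_{s}(r,z)$. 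Since $2r=\overline{\nabla J_{t,h}}$ and $\nabla J_{t,h}$ points out of $\Omega_{-}$ along $\partial\Omega_{-}$ (and $\nabla J_{t,h}\neq 0$ on $\{J_{t,h}=0\}$, so $\partial\Omega_{-}$ may be replaced by a nearby regular level set missing the cusps), the Poincar\'e--Hopf theorem applied to $\nabla J_{t,h}$ on $\overline{\Omega_{-}}$ yields $\sum_{z\in N\cap\Omega_{-}}m_{s}(r,z)=-\chi(\Omega_{-})$. Combining, $\sum_{z\in N}m_{s}(G_{t,h},z)=\sum_{N\cap\Omega_{-}}m_{s}(r,z)-\sum_{N\cap\Omega_{+}}m_{s}(r,z)=2\bigl(-\chi(\Omega_{-})\bigr)-(-1)=1-2\chi(\Omega_{-})$, hence $\sum_{z\in C}m_{s}(G_{t,h},z)=k-1+2\chi(\Omega_{-})$.

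It remains to compute $\chi(\Omega_{-})$ for $0<|t|\ll 1$. As $J_{t,h}\to|\frac{\partial f}{\partial z}|^{2}$, which is positive on $\overline{U}\setminus\{0\}$, the set $\Omega_{-}$ shrinks to $\{0\}$; writing $\frac{\partial f}{\partial z}=cz^{k-1}+O(z^{k})$ with $c\neq 0$ and rescaling $z=|t|^{1/(k-1)}\zeta$, one checks that $|t|^{-2}J_{t,h}$ converges uniformly on compact sets to $\Theta(\zeta)=\bigl|c\zeta^{k-1}+t_{0}\frac{\partial h}{\partial z}(0)\bigr|^{2}-\bigl|\frac{\partial h}{\partial\bar z}(0)\bigr|^{2}$ (with $t_{0}=\operatorname{sgn}t$), whose zero set is a regular curve; hence $\Omega_{-}$ is diffeomorphic to $\Omega_{\infty}=\{\Theta<0\}$, the preimage of the disk $B(0,|\frac{\partial h}{\partial\bar z}(0)|)$ under the degree-$(k-1)$ branched cover $\zeta\mapsto c\zeta^{k-1}+t_{0}\frac{\partial h}{\partial z}(0)$, whose only critical value is $t_{0}\frac{\partial h}{\partial z}(0)$. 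If $\delta=-1$ the disk contains the critical value, so by Riemann--Hurwitz $\Omega_{\infty}$ is connected with $\chi(\Omega_{-})=1$, and then $\sum_{z\in C}m_{s}(G_{t,h},z)=k+1=k-\delta$. If $\delta=+1$ the disk misses the critical value, so $\Omega_{\infty}$ is a (possibly empty) disjoint union of open disks and $\chi(\Omega_{-})\geq 0$, and then $\sum_{z\in C}m_{s}(G_{t,h},z)\geq k-1=k-\delta$. In either case $\sum_{z\in C}m_{s}(G_{t,h},z)\geq k-\delta>0$ (using $k\geq 2$, since the origin is a singularity of $f$), so $\#C\geq k-\delta$, as required.

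I expect the determination of $\chi(\Omega_{-})$ to be the main obstacle, for two reasons: one must verify that for small $t$ no component of $\Omega_{-}$ lies outside the rescaled model $\Omega_{\infty}$, and one must handle the cusps sitting on $\partial\Omega_{-}$ by passing to a regular level set of $J_{t,h}$. It is precisely here that the hypothesis $|\frac{\partial h}{\partial z}(0)|\neq|\frac{\partial h}{\partial\bar z}(0)|$ is essential, as it both makes $\partial\Omega_{\infty}$ regular and fixes the topological type of $\Omega_{\infty}$. A minor point to tidy up beforehand is the implicit assumption that $r^{-1}(0)\cap U$ is finite; this can be arranged by a generic perturbation of the higher-order part of $h$, which changes neither $\delta$ nor, by a stability argument as in Theorem~$3$, the number of cusps.
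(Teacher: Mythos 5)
Your overall skeleton (cusps $=\{G_{t,h}=0,\ \frac{\partial J_{t,h}}{\partial z}\neq 0\}$, condition (ii) giving $\#C\geq\sum_{C}m_{s}$, and conservation of total multiplicity $\sum_{C}m_{s}+\sum_{N}m_{s}=m_{s}(G_{0,h},0)=k$ with $\sum_{N}m_{s}(r,\cdot)=-1$) is exactly the paper's, but at the decisive step you genuinely diverge. The paper simply asserts that the contribution of $N=r^{-1}(0)$, namely $\deg\tilde J=\sum_{\gamma\in N}m_{s}(G_{t,h},\gamma)$, equals $\delta$, citing only $\frac{\partial f}{\partial z}(0)=0$ and $\lvert\frac{\partial h}{\partial z}(0)\rvert\neq\lvert\frac{\partial h}{\partial\bar z}(0)\rvert$; implicitly this treats all zeros of $r$ as if they sat where $J_{t,h}$ has the sign of $J_{t,h}(0)$. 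You instead do the local Rouch\'e comparison at each $\gamma\in N$ (sign of $J_{t,h}(\gamma)$ decides whether $p\bar r$ or $qr$ dominates), convert the $\Omega_-$-part into $-\chi(\Omega_-)$ by Poincar\'e--Hopf, and obtain the refined identity $\sum_{C}m_{s}=k-1+2\chi(\Omega_-)$, then pin down $\chi(\Omega_-)$ by rescaling and Riemann--Hurwitz. This buys more than the paper's argument: it localizes exactly where the sign bookkeeping comes from, it recovers the paper's value $k+1$ when $\delta=-1$, and when $\delta=+1$ with $\frac{\partial h}{\partial\bar z}(0)\neq 0$ it gives $\sum_{C}m_{s}=3k-3$, which still satisfies the stated inequality but differs from the paper's claimed equality $\deg\tilde G=k-\delta$ (compare $f=z^{k}$, $h=z+c\bar z$, $0<\lvert c\rvert<1$, where the zeros of $r$ split between $\Omega_+$ and $\Omega_-$). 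So your route is not only different; it is the one that actually verifies the sign analysis the paper leaves implicit.

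Two points still need real work. First, the subcase $\delta=+1$ with $\frac{\partial h}{\partial\bar z}(0)=0$: there $\Theta=\lvert c\zeta^{k-1}+t_{0}\frac{\partial h}{\partial z}(0)\rvert^{2}\geq 0$, so $\Omega_\infty=\emptyset$ and its boundary is not regular, yet $\Omega_-\neq\emptyset$, since the $k-1$ zeros of $p=\frac{\partial f}{\partial z}+t\frac{\partial h}{\partial z}$ near the origin satisfy $q\neq 0$ (a common zero of $p$ and $q$ would be a rank-zero critical point, contradicting (i)) and hence $J_{t,h}<0$ there. Thus "$\Omega_-$ diffeomorphic to $\Omega_\infty$" fails in this subcase and your bound $\chi(\Omega_-)\geq 0$ needs its own argument (e.g.\ a finer rescaling near each zero of $p$ showing every component of $\Omega_-$ is a disc); without it the $\delta=+1$ case is not closed. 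Second, the finiteness of $r^{-1}(0)\cap U$ (equivalently, isolatedness of the zeros of $G_{t,h}$), which both your multiplicity bookkeeping and Poincar\'e--Hopf require: your proposed fix by perturbing the higher-order part of $h$ changes the map whose cusps are being counted, so you must actually prove the accompanying stability statement (that a generic small perturbation preserves (i), (ii), $\delta$ and the cusp number, in the spirit of the paper's Theorem on $f_{t,s}$) or argue finiteness directly; note the paper's proof makes the same tacit assumption. Finally, the isotopy $\Omega_-\cong\Omega_\infty$ in the nondegenerate cases (uniform $C^{1}$ convergence after rescaling, absence of components of $\Omega_-$ at intermediate scales, regularity of $\Theta^{-1}(0)$ from $\lvert\frac{\partial h}{\partial z}(0)\rvert\neq\lvert\frac{\partial h}{\partial\bar z}(0)\rvert$) is standard but must be written out, as you yourself flag.
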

\begin{proof}
Note that $m_{s}(G_{0,h}, 0) = 
m_{s}((\frac{\partial f}{\partial z})^{2}\overline{\frac{\partial^{2} f}{\partial z\partial z}}, 0) = k$. 
By \cite[Proposition 16]{O}, we have 
\begin{equation*}
\begin{split}
k = m_{s}(G_{0,h}, 0) &= \textstyle\sum_{\alpha \in G_{t,h}^{-1}(0)}m_{s}(G_{t,h}, \alpha) \\
&= 
\textstyle\sum_{\beta \in G_{t,h}^{-1}(0), \frac{\partial J_{t, h}}{\partial z}(\beta) \neq 0}m_{s}(G_{t,h}, \beta) 
+ \textstyle\sum_{\gamma \in (\frac{\partial J_{t, h}}{\partial z})^{-1}(0)} m_{s}(G_{t,h}, \gamma). 
\end{split}
\end{equation*}
Set $\deg \tilde{G} = \textstyle\sum_{\beta \in G_{t,h}^{-1}(0), \frac{\partial J_{t, h}}{\partial z}(\beta) \neq 0}m_{s}(G_{t,h}, \beta)$ and 
$\deg \tilde{J} = \textstyle\sum_{\gamma \in (\frac{\partial J_{t, h}}{\partial z})^{-1}(0)} m_{s}(G_{t,h}, \gamma)$. 
By the condition (ii), 
the number of cusps of $f_{t,h}|_{U}$ is greater than or equal to $\deg \tilde{G}$. 
By the definition of $J_{t, h}$, we have 
\[
\textstyle\sum_{\gamma \in (\frac{\partial J_{t, h}}{\partial z})^{-1}(0)} m_{s}(\frac{\partial J_{t, h}}{\partial z}, \gamma) = 
m_{s}(\frac{\partial J_{0, h}}{\partial z}, 0) = 
m_{s}(\frac{\partial^{2} f}{\partial z\partial z}\overline{\frac{\partial f}{\partial z}}, 0) =-1. 
\]
Since $\frac{\partial f}{\partial z}(0) = 0$ and 
$\lvert \frac{\partial h}{\partial z}(0)\rvert \neq \lvert \frac{\partial h}{\partial \bar{z}}(0)\rvert$, 
$\deg \tilde{J}$ is equal to $\delta$. 
Thus $\deg \tilde{G}$ 
is equal to $k - \delta$. 
\end{proof}

\section{Examples}
In this section, 
we construct a deformation of a complex polynomial which has 
$(n+1)$-cusps and also a deformation which has $(3n-3)$-cusps. 
\begin{Example}
Let $f(z) = z^{n}$ and $f_{t}(z) = z^{n} + t(a+ib)\bar{z}$ be a deformation of $f$ 
which satisfies the condition $(3)$. 
Then $G_{t}(z)$ is equal to 
\begin{equation*}
\begin{split}
G_{t}(z) &= -2in^{3}(n-1)z^{2n-2}\bar{z}^{n-2} + 2tn^{2}(n-1)(a+ib)z^{n-2}\bar{z}^{n-1} \\
      &= -2in^{2}(n-1)\lvert z\rvert^{2n-4}\{nz^{n} - t(a+ib)\bar{z}\}.
\end{split}
\end{equation*}
Set $z=re^{i\theta}$ and $a+ib = \tau e^{i\iota}$, where $\tau > 0$. 
Then we have 
\[
-2in^{2}(n-1)r^{2n-4}\{nr^{n}e^{ni\theta} - t\tau re^{i(\iota-\theta)}\}. 
\]
Assume that $z \neq 0$ and $G_{t}(z) = 0$. Then $z$ satisfies 
\[
r = \Bigl(\frac{t\tau}{n}\Bigr)^{\frac{1}{n-1}}, \ \ \ 
\theta = \frac{\iota + 2j\pi}{n+1}, 
\]
for $j = 0, \dots, n$. Thus the number of cusps of $f_t$ is equal to $n+1$. 
\end{Example}

\begin{Example}
Let $f(z) = z^{n} + z$. 
Then the number of singularities of $f$ is equal to $n-1$ and 
the multiplicity at each singularity of $f$ is equal to $2$. 
Let $f_{t}(z) = z^{n} + z + t(a+ib)\bar{z}$ be a deformation of $f$ which is an excellent map. 
By the same argument as in the proof of Corollary $1$, the number of cusps of $f_t$ is equal to $3n-3$. 
\end{Example}


\begin{thebibliography}{99}

\bibitem{FJR}
   M. Farnik, Z. Jelonek and M. A. S. Ruas, \textit{Effective Whitney theorem for complex polynomial mappings of the plane}, 
   arXiv: 1503.00017. 

\bibitem{FI}
  T. Fukuda, G. Ishikawa, \textit{On the number of cusps of stable perturbations of a plane-to-plane singularity}, 
  Tokyo J. Math. $\mathbf{10}$ (1987), 375--384. 

\bibitem{GM}
  T. Gaffney, D. M. Q. Mond, \textit{Cusps and double folds of germs of analytic maps $\Bbb{C}^{2} \rightarrow \Bbb{C}^{2}$}, 
  J. London Math. Soc. (2) $\mathbf{43}$ (1991), 185--192. 

\bibitem{IIKT}
  K. Inaba, M. Ishikawa, M. Kawashima and T. T. Nguyen,
  \textit{On linear deformations of Brieskorn singularities of two variables into generic maps},
  Tohoku Math. J. $\mathbf{69}$ (2017), 85--111. 



\bibitem{KS}
  I. Krzy\.{z}anowska, Z. Szafraniec, \textit{On polynomial mappings from the plane to the plane},
  J. Math. Soc. Japan. $\mathbf{66}$ (2014), 805--818.


\bibitem{L1}
  H. Levine,
  \textit{Elimination of cusps},
  Topology. $\mathbf{3}$ (1965), 263--296.




  
\bibitem{O}
  M. Oka,
  \textit{Intersection theory on mixed curves},
  Kodai Math. J. $\mathbf{35}$ (2012), 248--267.
  
  
  
\bibitem{Q}
  J. R. Quine, \textit{A global theorem for singularities of maps between oriented $2$-manifolds},
  Trans. Amer. Math. Soc. $\mathbf{236}$ (1978), 307--314.
  
  
  
  


\bibitem{S}
  Z. Szafraniec, \textit{On bifurcations of cusps}, arXiv:1710.00591,  
  to appear in J. Math. Soc. Japan.

\bibitem{W}
  H. Whitney, \textit{On singularities of mapping of Euclidean spaces. I. Mappings of the plane into the plane}, 
  Ann. of Math. (2), $\mathbf{62}$ (1955), 374--410.
\end{thebibliography}
\end{document}